\documentclass[12pt]{article}

\usepackage{bm,latexsym,amsfonts,amsmath,amssymb,amsthm,url,graphics,psfrag,amscd,mathrsfs}
\usepackage{enumerate}
\usepackage{graphics}
\usepackage{graphicx}
\usepackage{color}
\usepackage{a4wide}
\usepackage{authblk} 
\usepackage{colonequals}

\newcommand{\R}{\mathbb{R}}

\newcommand{\E}{\mathbb{E}}

\newcommand{\pp}{\mathbb{P}}

\newcommand{\kO}{O}

\newcommand{\kS}{\mathcal{S}}

\newcommand{\kL}{\mathcal{L}}

\newcommand{\lin}{\left[\kern-0.15em\left[}
\newcommand{\rin} {\right]\kern-0.15em\right]}
\newcommand{\linf}{[\kern-0.15em [}
\newcommand{\rinf} {]\kern-0.15em ]}
\newcommand{\ilin}{\left]\kern-0.15em\left]}
\newcommand{\irin} {\right[\kern-0.15em\right[}

\newcommand{\Keywords}[1]{\par\indent
{\small{\textbf{Key words and phrases.} \/} #1}}
\newcommand{\subjclass}[1]{\par\indent{ \textbf{AMS subject classification.} #1}}

\newtheorem{lem}{Lemma}[section]

\newtheorem{prop}[lem]{Proposition}
\newtheorem{theo}[lem]{Theorem}

\newtheorem {rem}[lem] {Remark}

\newtheorem*{ack}{Acknowledgments}


\title{Persistence probability of a random polynomial arising from evolutionary game theory}
\author[1,3]{Van Hao Can}
\author[2] {Manh Hong Duong}
\author[1]{Viet Hung Pham}
\affil[1]{Institute of Mathematics, Vietnam Academy of Science and Technology, 18 Hoang Quoc Viet Street, 10307 Hanoi, Vietnam. Emails: cvhao89@gmail.com; pgviethung@gmail.com}
\affil[2]{School of Mathematics, University of Birmingham, Birmingham B15 2TT,  UK. Email: h.duong@bham.ac.uk}
\affil[3]{Research Institute for Mathematical Sciences, Kyoto University, Kyoto 606--8502, Japan.}

\begin{document}
\maketitle
\begin{abstract}
In this paper, we obtain an asymptotic formula for the persistence probability in the positive real line of a random polynomial arising from evolutionary game theory. It corresponds to the probability that a multi-player two-strategy random evolutionary game has no internal equilibria. The key ingredient is to approximate the sequence of random polynomials indexed by their degrees by an appropriate centered stationary Gaussian process. 
\end{abstract}
\Keywords random polynomials, evolutionary game theory, persistence probability, equilibrium points, Gaussian process.

\noindent\subjclass 30C15, 26C10, 91A22.
\section{Introduction}
\subsection{Motivation}
In this paper, we study the \textit{persistence probability}, that is the probability of not changing sign, in the positive real line of the following random polynomial 
\begin{equation}
\label{eq: fn}
f_n(x)=\sum_{i=0}^{n} \binom{n}{i}a_ix^i,
\end{equation}
where the coefficients $a_i$'s are real and independent identically distributed (i.i.d.) standard normal random variables.

Our first motivation is from random polynomial theory in which the study of zeros of a random polynomial has been studied extensively since the seminal paper of Block and P\'{o}lya \cite{BP32}. We review here relevant work on the persistence probability and refer the reader to standard monographs~\cite{BS86,farahmand1998} and recent articles~\cite{TaoVu14,NNV2016,DoVu2017,butez2017} and references therein for information on other aspects of random polynomials such as the expected number of roots, central limit theorem and large deviations.  A random polynomial can be generally expressed by\begin{equation}
P_n(x)=\sum_{i=0}^{n} c_i\,\xi_i\,x^i,
\end{equation}
where $c_i$ are deterministic coefficients which may depend on both $n$ and $i$  and $\xi_i$ are random variables. The most popular random polynomials studied in the literature are:
\begin{enumerate}[(i)]
\item Kac polynomials (denoted by $P^K_n$): $c_i\colonequals 1$,
\item Weyl (or flat) polynomials ($P^W_n$): $c_i\colonequals\frac{1}{i!}$,
\item Elliptic (or binomial) polynomials ($P^E_n$): $c_i\colonequals\sqrt{\begin{pmatrix}
n\\
i
\end{pmatrix}}$.
\end{enumerate}
For Kac polynomials, it is shown in~\cite{LittlewoodOfford1939,LittlewoodOfford1948} that, when $\{\xi_i\}_{i=0}^n$ are i.i.d. and are either all uniform on $[−1, 1]$ or all Gaussian or all uniform on $\{−1, 1\}$, $\mathbb{P}(N^K_{n}=0)=O(1/\log n)$ where $N^K_n$ is the number of real zeros of the Kac polynomial $P_n^K$. This result is extended in~\cite{Dembo2002} to the case where $\xi_i$ are i.i.d. random variables with the common distribution having finite moments of all orders as
$$
\mathbb{P}(P^K_n(x) > 0, \forall x\in\R) = n^{-4b_0+o(1)},
$$
where the constant $b_0$ above is given by
$$
b_0=-\lim\limits_{t\to\infty} t^{-1}\log\mathbb{P}(X_s>0,~\forall s\in[0,t]),$$
where $X$ is a centered stationary Gaussian process with correlation $\mathbb{E}(X_0X_t)=1/\cosh(t/2)$.   In~\cite{SM08}, the authors develop a mean-field
approximation to re-derive the persistence probability of (generalized) Kac polynomials relating it to zero crossing properties of the diffusion equation with
random initial conditions. Moreover, using this method, they predict and numerically verify the following asymptotic formulas for elliptic and  Weyl models:
\begin{enumerate}[(i)]
\item For elliptic polynomials: 
\begin{equation}
\label{eq: elliptic}
\lim\limits_{n\rightarrow\infty}\frac{\log\mathbb{P}(P^E_n(x)>0,~\forall x\in\R)}{\sqrt{n}}=-2\pi b,
\end{equation}
where $b$ is a positive constant defined as
\begin{equation}
\label{eq: b}
b=-\lim_{T\rightarrow\infty}\frac{\log\mathbb{P}(\inf_{0\leq t\leq T} Y(t))}{T},
\end{equation}
where $Y(t)$ is a centered stationary Gaussian process with correlation $\mathbb{E}(Y_0Y_t)=e^{-t^2/2}$.
\item For Weyl polynomials
\begin{equation}
\label{eq: Weyl}
\lim\limits_{n\rightarrow\infty}\frac{\log\mathbb{P}(P^W_n(x)>0,~\forall x\in\R)}{\sqrt{n}}=-2 b,
\end{equation}
with the same constant $b$ as in \eqref{eq: b}.
\end{enumerate}
The statement \eqref{eq: elliptic} for elliptic polynomials is proven in~\cite{DM}. In addition, this work also shows \eqref{eq: Weyl} for $I_n = [0,\sqrt{n}-\alpha_n]$ with $n^{−1/2}\alpha_n\rightarrow 0$ obtaining the persistence exponent $-b$. More recently, by extending the method of~\cite{DM}, the authors of~\cite{CanPham2017TMP} prove \eqref{eq: Weyl} for Weyl polynomials.  Inspired by this development, in this paper we study the asymptotic behaviour of the persistence probability in the positive real line of the random polynomial $f_n$ in \eqref{eq: fn}. This is a new class of random polynomials and as will be discussed in the next paragraph, the persistence probability of $f_{n-1}$ corresponds to the probability that an $n$-player two-strategy random evolutionary game has no internal equilibria. 

Our second motivation comes from evolutionary game theory~ \cite{SP73,hofbauer:1998mm}. As will be shown in Appendix \ref{sec: replicator}, under certain assumptions, the polynomial $f_n$ originates from the study of equilibrium points in random evolutionary game theory: finding an internal equilibrium point in a symmetric $n$-player two-strategy random game is equivalent to finding a positive zero of $f_{n-1}$. In particular, the persistence probability of $f_{n-1}$ in the positive real line corresponds to the probability that the random game has no internal equilibria.   Random evolutionary games have been used widely and successfully in the mathematical modelling of social and biological systems where limited
information is available or where the environment changes so rapidly and frequently that one
cannot predict the payoffs of their inhabitants. Such scenarios arise in fields as biology, ecology, population genetics, economics and social sciences~\cite{may2001stability,fudenberg1992evolutionary,HTG12,
gross2009generalized}.  In these situations, due to randomness, characterizing the statistical properties of equilibrium points becomes essential and has attracted considerable interest in recent years. In~\cite{gokhale:2010pn,HTG12,gokhale2014evolutionary}, the authors provide analytical and simulation results for random games with a small number of players ($n\leq 4$) focusing on the probability of attaining the maximal number of equilibrium points. In~\cite{DH15,DuongHanJMB2016,DuongTranHan2017a}, the authors derive a closed formula for the expected number of internal equilibria, characterize its asymptotic behaviour and study the effect of correlations. Related work on the expected number of equilibrium points of random large complex systems arising from physics and ecology are presented in~\cite{Fyo2004,FyoNadal2012,FyoKho2016}, see also references therein. More recently, \cite{DuongTranHan2017b} offers, among other things, an analytical formula for the probability that a multi-player two-strategy game has a certain number of internal equilibria. Although the analytical formula is theoretically interesting, it involves complicated multiple integrals and is computationally intractable when the number of player becomes large. The present paper provides an asymptotic formula, as the number of players tends to infinity, for the probability that the game has no internal equilibria. Biologically this probability corresponds to the two extreme cases when the whole population consists of only one specie/strategy while the other extincts.
\subsection{The main result of the paper} 
The main result of the present paper is the following theorem.
\begin{theo}\label{mth}
Let $f_n$ be defined in \eqref{eq: fn} where the coefficients $a_i$ are i.i.d. standard normal random variables. Then we have
\begin{equation}
\label{eq: main thm}
\lim_{n \rightarrow \infty} \frac{\log \pp \left( f_n(x) >0 , \, \forall x\in (0,\infty)\right)}{\pi \sqrt{n}}=-b,
\end{equation}
where $b$ is the persistence exponent defined by
\begin{equation}\label{exp}
b= - \lim_{T \rightarrow \infty} \frac{\log \pp (\inf_{0 \leq t \leq T} Z(t) >0)}{T} ,
\end{equation}
where $Z(t)$ is a centered stationary Gaussian process with correlation $\E (Z_0Z_t)=e^{-t^2/4}$.
\end{theo}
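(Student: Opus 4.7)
The plan is to reduce Theorem \ref{mth} to the persistence of the limiting stationary Gaussian process $Z$ by performing a change of variables that turns $f_n$ into an asymptotically stationary Gaussian process on an interval of length $\pi\sqrt{n}$. First, observe that the covariance kernel is
\[
K_n(x,y)=\E[f_n(x)f_n(y)]=\sum_{i=0}^{n}\binom{n}{i}^{2}(xy)^i.
\]
A Laplace/saddle-point analysis of this sum (the integrand is maximized at $i/n=\sqrt{xy}/(1+\sqrt{xy})$) yields, uniformly on compact subsets of $(0,\infty)^2$,
\[
K_n(x,y)=\frac{(1+\sqrt{xy})^{2n+1}}{2(xy)^{1/4}\sqrt{2\pi n}}\bigl(1+O(1/n)\bigr),
\]
so the normalized correlation behaves like
\[
\rho_n(x,y)=\frac{K_n(x,y)}{\sqrt{K_n(x,x)K_n(y,y)}}=\left(\frac{(1+\sqrt{xy})^{2}}{(1+x)(1+y)}\right)^{n}\bigl(1+o(1)\bigr).
\]
This suggests the change of variables $x=\tan^{2}(\tau/2)$, $\tau\in(0,\pi)$, which sends $(0,\infty)$ bijectively onto $(0,\pi)$; note that the map $x\mapsto 1/x$ preserves the law of $f_n$ (via reversal of coefficients) and corresponds to $\tau\mapsto\pi-\tau$.

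Expanding $\log\rho_n$ around $\tau_1=\tau_2$ using the identity $\frac{e^u}{(1+e^u)^2}=\frac{1}{4\cosh^{2}(u/2)}$ and the parametrization $\log x=u+v$, $\log y=u-v$, one finds $\log\rho_n/n=-\frac{v^2}{4\cosh^{2}(u/2)}+O(v^4)$. Since the differential of the change of variables satisfies $d\tau/d(\tfrac{1}{2}\log x)=1/\cosh(\tfrac{1}{2}\log x)$, the intrinsic metric becomes flat in $\tau$, and after the further rescaling $\hat\tau=\sqrt{n}\,\tau$ the correlation at $\hat\tau_1,\hat\tau_2$ converges to $\exp\bigl(-(\hat\tau_1-\hat\tau_2)^2/4\bigr)$, uniformly on compact subsets of $(0,\pi\sqrt{n})$ that avoid a neighborhood of $\{0,\pi\sqrt{n}\}$. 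The next step is therefore to define the normalized, rescaled process $\tilde f_n(\hat\tau)=f_n(\tan^2(\hat\tau/(2\sqrt n)))/\sqrt{K_n}$ and compare it with $Z$ on windows of fixed length $T$.

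To conclude, split $[0,\pi\sqrt{n}]$ into $\lfloor\pi\sqrt{n}/T\rfloor$ adjacent windows of length $T$, together with two short boundary strips. On each interior window, the above uniform convergence plus a Kahane/Slepian-type comparison (along the lines of \cite{DM,CanPham2017TMP}) gives
\[
e^{-(b+\varepsilon)T}\le \pp\bigl(\tilde f_n>0 \text{ on the window}\bigr)\le e^{-(b-\varepsilon)T}
\]
for $n$ large enough. A decorrelation estimate (obtained from the explicit bound $\rho_n(x,y)\le \exp(-cn (\tau_1-\tau_2)^2)$ valid on compacts, and a polynomial tail of $\rho_n$ when $|\tau_1-\tau_2|$ is macroscopic) justifies multiplying the window-wise bounds up to a subexponential factor. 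Sending $n\to\infty$ then $T\to\infty$ and finally $\varepsilon\to 0$ yields $\log\pp(f_n>0\text{ on }(0,\infty))=-(b+o(1))\pi\sqrt{n}$.

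The principal obstacle is the non-stationarity of $f_n$: the comparison with $Z$ is only local, so one must patch together many window estimates while controlling the cumulative error. This requires \emph{uniform} quantitative bounds on $\rho_n-e^{-(\hat\tau_1-\hat\tau_2)^2/4}$ on windows whose centers range over $[\varepsilon\sqrt{n},(\pi-\varepsilon)\sqrt{n}]$, as well as a sufficiently strong decorrelation estimate to treat distant windows as almost independent. A secondary issue is the contribution of the boundary strips $\tau\in(0,\varepsilon)\cup(\pi-\varepsilon,\pi)$, where the saddle-point approximation degenerates; by the $\tau\mapsto\pi-\tau$ symmetry it suffices to treat $\tau\in(0,\varepsilon)$, and there one bounds the persistence probability below by a crude constant and above by $1$, so that taking $\varepsilon\to 0$ after $n\to\infty$ makes the boundary contribution $o(\sqrt{n})$ in the exponent.
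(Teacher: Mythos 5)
Your change of variables $x=\tan^{2}(\tau/2)$ followed by rescaling $\hat\tau=\sqrt{n}\,\tau$, and the identification of the limiting correlation $e^{-(\hat\tau_1-\hat\tau_2)^2/4}$, coincide with the paper's (the paper does it in one step, $x=\tan^2(t/2\sqrt n)$). However, there are two substantive gaps that the remaining steps do not close.

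\textbf{Boundary strips.} You claim that on $\tau\in(0,\varepsilon)$ the persistence probability of $f_n$ is bounded below ``by a crude constant.'' This is false. In the rescaled variable this strip is $\hat\tau\in(0,\varepsilon\sqrt{n})$, an interval of length proportional to $\sqrt n$; the zero-crossing intensity of $\tilde f_n$ there is of order one, so the persistence probability decays like $e^{-c\varepsilon\sqrt n}$ for some $c>0$, not like a constant. Worse, the saddle-point approximation you rely on fails entirely for $x\lesssim n^{-1/6}$ (equivalently $\hat\tau\lesssim n^{5/12}$), which is a sub-interval of your fixed-$\varepsilon$ strip, so you cannot invoke approximate stationarity there. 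The paper deals with this by carving out the much narrower intervals $(0,n^{-1/6})$ and $(n^{1/6},\infty)$ and proving directly (Proposition \ref{pne}, via Lemma \ref{ldm} and Slepian) that their log-persistence is $o(\sqrt n)$; you would need an estimate of that type, not a constant bound, and your proposal contains no mechanism for obtaining one.

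\textbf{Upper bound via window decomposition.} Slepian's inequality for a nonnegatively correlated Gaussian process gives $\pp(\text{persist on }\cup_j I_j)\geq\prod_j\pp(\text{persist on }I_j)$; this is the direction you need for the \emph{lower} bound. Your statement that a ``decorrelation estimate'' justifies ``multiplying the window-wise bounds'' to get an \emph{upper} bound of the form $\prod_j e^{-(b-\varepsilon)T}$ is not correct as stated: positive correlation produces an inequality in the opposite direction, and the polynomial decay of $\rho_n$ you invoke is far too weak to recover near-independence of the windows. The delicate content of the problem is precisely the upper bound for a sequence of non-stationary processes converging to a stationary one, and it requires the machinery encapsulated in the paper's Lemma \ref{lem1} (combining \cite{DM} Theorem~1.6 and \cite{DM2016} Lemma~3.1), whose hypotheses (b1)--(b4) encode uniform convergence of correlations, a strong tail bound, a modulus-of-continuity control on $p_k^2$, and a non-degeneracy condition on the limit. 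Your outline does not identify or verify these conditions, and the ``patch windows together'' argument you sketch would not yield them.

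Minor: your asymptotics for $K_n$ have $\sqrt{2\pi n}$ where the paper's Lemma \ref{lom} gives $\sqrt{\pi n}$ (check $x=y=1$: $\binom{2n}{n}\sim 4^n/\sqrt{\pi n}$); this constant cancels in $\rho_n$ so it does not affect the main claim, but it indicates the saddle-point computation was not carried through carefully.
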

The idea of the proof is as follows. We first show that the contributions to the persistence exponent of intervals $(0,n^{-1/6})$ and $(n^{1/6},\infty)$ are negligible. We then apply the method of \cite{DM} to prove that the main contribution from the interval $(n^{-1/6},n^{1/6})$ can be calculated approximately from that of a centered stationary Gaussian process with autocorrelation function $R(t)=e^{-t^2/4}$.  
\subsection{Organization of the paper}
The rest of paper is organized as follows. Section \ref{sec: prel} contains technical lemmas. The proof of the main theorem is presented in Section \ref{sec: proof}. We provide further discussion on future work in Section \ref{sec: final}. In Appendix \ref{sec: replicator}, we show the derivation of the random polynomial $f_n$ from the replicator dynamics for a symmetric multi-player two-strategy random evolutionary game.
\section{Preliminaries}
\label{sec: prel}
In this section, we prove some technical results that will be used in the proof of the main theorem presented in Section \ref{sec: proof}. 

Since  $\{a_i\}$ are i.i.d. random variables of standard normal distribution, the  random polynomial $f_n(x)$ is a  Gaussian process with autocorrelation function
\begin{equation}
A_n(x,y)= \frac{M_n(\sqrt{xy})}{\sqrt{M_n(x)}\sqrt{M_n(y)}},
\end{equation}
where 
\begin{equation}
\label{eq: Mn}
M_n(x)=\sum \limits_{i=0}^{n} \binom{n}{i}^2x^{2i}.
\end{equation}
We prove here a key lemma on the behavior of $M_n(x)$ as $n\rightarrow \infty$. 
\begin{lem} \label{lom} For $x\in (0,1]$, we define 
$$i_x=\left[ \frac{nx}{x+1}\right].$$
\begin{itemize}
\item[(i)]  For $\tfrac{\log n}{6n} \leq x \leq 1 $, we have 
\begin{eqnarray*}
\binom{n}{i_x}^2x^{2i_x} \leq M_n(x) \leq 3 i_x^{3/4} \binom{n}{i_x}^2x^{2i_x}.
\end{eqnarray*}
\item[(ii)]  For $n^{-1/6} \leq x \leq 1 $, we have 
\begin{eqnarray*}
M_n(x)&=& (1+\kO(n^{-1/24}))  \binom{n}{i_x}^2x^{2i_x}\frac{\sqrt{\pi n x}}{(x+1)}\\
&=&(1+\kO(n^{-1/24}))\frac{(x+1)^{2n+1}}{2\sqrt{\pi n x}}.
\end{eqnarray*}
\end{itemize}

\end{lem}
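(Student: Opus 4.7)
The plan rests on a single probabilistic rewriting of $M_n(x)$. Setting $p := x/(x+1)$ so that $p^i(1-p)^{n-i} = x^i/(x+1)^n$, we obtain
\begin{equation*}
M_n(x)\;=\;(x+1)^{2n}\sum_{i=0}^{n}\Bigl[\tbinom{n}{i}p^{i}(1-p)^{n-i}\Bigr]^{2}\;=\;(x+1)^{2n}\,\pp(S=S'),
\end{equation*}
where $S,S'$ are independent $\mathrm{Bin}(n,p)$. Since $np=nx/(x+1)$, the index $i_x$ is precisely the integer part of the common mean. Writing $t_i:=\binom{n}{i}^{2}x^{2i}$, the ratio $t_{i+1}/t_{i}=\bigl((n-i)x/(i+1)\bigr)^{2}$ is strictly decreasing in $i$ and equals $1$ near $i=i_x$, so $t_i$ is unimodal with peak at $i_x$.

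For part (i) the lower bound is the trivial domination by one term. For the upper bound I would iterate the ratio formula and Taylor-expand $\log\bigl((1-k/(n-i_x))/(1+k/(i_x+1))\bigr)$ to establish, for an absolute constant $c>0$ and all $k$ with $i_x+k\in[0,n]$,
\begin{equation*}
\frac{t_{i_x+k}}{t_{i_x}}\;\leq\;\exp\!\Bigl(-c\,k^{2}\,\tfrac{(x+1)^{2}}{nx}\Bigr).
\end{equation*}
This Gaussian domination is valid once $i_x$ is large, and the hypothesis $x\geq \log n/(6n)$ gives $i_x\gtrsim \log n$, which is enough. Summing over $k\in\Z$ then yields $M_n(x)\leq C\,t_{i_x}\sqrt{nx}/(x+1)$, and under the same hypothesis a routine check shows $\sqrt{nx}/(x+1)\leq 3 i_x^{3/4}$ (the ratio is $(nx)^{1/4}(x+1)^{1/4}$, which stays bounded below for $n$ large), delivering the stated constant.

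For part (ii) the sharper regime $x\geq n^{-1/6}$ upgrades the previous bound to a genuine asymptotic by Laplace's method. I would refine the ratio via Stirling's formula to
\begin{equation*}
\log\frac{t_{i_x+k}}{t_{i_x}}\;=\;-\alpha_x k^{2}+O\bigl(\beta_x|k|^{3}\bigr),\qquad \alpha_x=\tfrac{(x+1)^{2}}{nx}\bigl(1+O(1/nx)\bigr),\quad \beta_x=O\bigl((nx)^{-2}\bigr),
\end{equation*}
truncate the sum at $|k|\leq (nx)^{1/2}\log n$ (the tails are absorbed by part (i)), and compare the central block to the Gaussian integral $\sqrt{\pi/\alpha_x}=\sqrt{\pi nx}/(x+1)$, which produces the first displayed identity in (ii) with a uniform multiplicative error $O(n^{-1/24})$ over $x\in[n^{-1/6},1]$. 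The second displayed identity then follows by applying Stirling at the peak to give $\binom{n}{i_x}^{2}x^{2i_x}=(1+O(n^{-1/24}))\,(x+1)^{2n+2}/(2\pi nx)$ and multiplying.

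The main technical obstacle is keeping all these estimates uniform down to the endpoint $x=n^{-1/6}$, where $nx$ is only $n^{5/6}$ and both Stirling's remainder and the discrete-to-integral replacement degrade. One must track the two endpoints $i_x$ and $n-i_x$ separately (since for small $x$ one of them is much smaller than the other) and balance the truncation width against the cubic Stirling error; the exponent $1/24$ in the error bound is a comfortable aggregation of these two-sided contributions and is not tight.
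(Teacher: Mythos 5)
Your probabilistic rewriting $M_n(x)=(x+1)^{2n}\,\pp(S=S')$ and the observation that the terms $t_i=\binom{n}{i}^2x^{2i}$ are unimodal with mode at $i_x$ are both correct and lead in the right direction, and your treatment of part (ii) (local quadratic expansion plus a discrete-to-integral comparison) is in spirit the same Laplace-method argument as the paper. However, the key step of your part (i) --- the uniform Gaussian domination $t_{i_x+k}/t_{i_x}\leq\exp\bigl(-c\,k^2(x+1)^2/(nx)\bigr)$ for \emph{all} $k$ with $i_x+k\in[0,n]$ --- is false. The exponent $(x+1)^2/(nx)$ is the value of $|J''|$ (equivalently, $|\partial^2_t\log t_{nt}|/n$) at the peak $t=x/(x+1)$; but as $i$ moves from $i_x$ toward $n/2$ the curvature $|J''(t)|=1/[t(1-t)]$ \emph{decreases}, down to $4$ at $t=1/2$, which for small $x$ is far smaller than $(x+1)^2/x$. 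Concretely at $x=n^{-1/6}$ and $i=n/2$ one has $\log(t_{n/2}/t_{i_x})\approx n\log(4x)\approx -\tfrac{n}{6}\log n$, while your bound predicts $\approx -c\,n^{7/6}/4$, which is much more negative; so the claimed inequality is violated by a huge margin. The same issue undermines your plan for part (ii), where you ``absorb the tails by part (i)'': part (i) bounds the whole sum, not the tail, and a genuine tail estimate is still needed.

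The paper avoids this precisely by splitting the index range into three zones --- a central block $|i-i_x|\leq i_x^{3/4}$ where the Gaussian (second-order Taylor) bound with curvature $\sim 1/x$ is legitimate, an intermediate zone $i_x+i_x^{3/4}\leq i\leq 60i_x$ (and its left mirror) where concavity plus a one-step Taylor bound gives a super-polynomially small contribution, and a far zone $i>60i_x$ where a cruder linear-decay bound combined with the hypothesis $nx\geq\log n/6$ yields a contribution $O(n^{-1/5})$. To repair your argument you would have to replace ``sum the Gaussian bound over $k\in\Z$'' by exactly this kind of three-zone decomposition: the Gaussian estimate only controls the central block; outside it the decay is slower than Gaussian but the individual terms are so small that a trivial count of $\leq n$ terms suffices. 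Once this is in place, your remaining steps (the $\sqrt{nx}/(x+1)\ll i_x^{3/4}$ comparison for (i) and the Riemann-sum approximation for (ii)) go through and match the paper's proof.
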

\begin{proof} The proof of this lemma is fairly lengthy and technical. The main idea is to use  Stirling formula to approximate the summand in $M_n(x)$ by another function $J_x\Big(\frac{i}{n}\Big)$, see Eq. \eqref{doj} below. The tasks are then to understand the behaviour of $J_x\Big(\frac{i}{n}\Big)$ which is subtly dependent on the relationship between $x$ and $n$. It turns out that the behaviour of $M_n(x)$ depends on whether $x\in[\tfrac{\log n}{6n},1]$ or $x\in[n^{-1/6},1]$ as in Part (i)/(ii). This will be carried out using an intermediate parameter $i_x$ defined at the beginning of the lemma.

Let us start with Stirling formula that 
\begin{eqnarray*}
i! = \sqrt{2\pi i}(1+\kO(i^{-1})) \left(\frac{i}{e}\right)^i.
\end{eqnarray*}
Therefore,
\begin{eqnarray*}
\binom{n}{i}&=&\sqrt{\frac{n}{2\pi i(n-i)}} \left(1+\kO(\max(i^{-1},(n-i)^{-1}))\right) \left(\frac{n}{i}\right)^i \left(\frac{n}{n-i}\right)^{n-i}\\
&=& \sqrt{\frac{n}{2\pi i(n-i)}} \left(1+\kO(\max(i^{-1},(n-i)^{-1}))\right) \exp \left( n I\left(\frac{i}{n}\right) \right), 
\end{eqnarray*}
where $I(0)=I(1)=0$ and for $t\in(0,1)$,
$$I(t)=(t-1) \log (1-t)-t \log t.$$
Hence
\begin{eqnarray} \label{doj}
\binom{n}{i}^2 x^{2i} = \frac{n}{2\pi i(n-i)} \left(1+\kO(\max(i^{-1},(n-i)^{-1}))\right) \exp \left( 2n  J_x\left(\frac{i}{n}\right) \right),
\end{eqnarray}
where 
$$J_x(t)=I(t)+ t \log x.$$
We notice that 
\begin{equation} \label{coc}
J_x\left(\frac{x}{x+1}\right)= \log (x+1), \quad J'_x\left(\frac{x}{x+1}\right)= 0, \quad  J''_x(t)= \frac{-1}{t(1-t)} \,\,\forall \,\, t\in(0,1).   
\end{equation}
Therefore, using Taylor expansion, we get 
\begin{eqnarray*}
J_x\left(\frac{i_x}{n}\right) &=& J_x\left(\frac{x}{x+1}\right)  + J_x'\left(\frac{x}{x+1}\right) \left( \frac{i_x}{n} -\frac{x}{x+1} \right) + \frac{J_x''(\eta_x)}{2} \left( \frac{i_x}{n} -\frac{x}{x+1} \right)^2\\
&=& \log (x+1) + \frac{J_x''(\eta_x)}{2} \left( \frac{i_x}{n} -\frac{x}{x+1} \right)^2,
\end{eqnarray*}
with some $\eta_x \in \left( \tfrac{i_x}{n}, \tfrac{x}{x+1} \right)$.  Using the assumption that $x\in (0, 1]$, we have 
\begin{eqnarray*}
\big | \frac{i_x}{n} - \frac{x}{x+1} \big | \leq \frac{1}{n}, \quad |J''(\eta_x)| = \frac{1}{\eta_x(1-\eta_x)}  \leq \frac{2n}{i_x}.
\end{eqnarray*}
Hence
\begin{eqnarray*}
2n\left(J_x\left(\frac{i_x}{n}\right) - \log(x+1) \right) \leq \frac{2}{i_x}. 
\end{eqnarray*}
Therefore, using the fact that $\max(i_x^{-1},(n-i_x)^{-1})=i_x^{-1}$, we get
\begin{eqnarray}
\binom{n}{i_x}^2 x^{2i_x} &=& \frac{n}{2\pi i_x(n-i_x)} \left(1+\kO(i_x^{-1})\right) \exp \left( 2n  J_x\left(\frac{i_x}{n}\right) \right) \label{eix}\\
&=& \frac{ (x+1)^2}{2\pi n x}\left(1+\kO(i_x^{-1})\right) (x+1)^{2n} \notag\\
&=&  \left(1+\kO(i_x^{-1}) \right)\frac{(x+1)^{2n+2}}{2 \pi nx}. \notag
\end{eqnarray}
We now estimate $M_n(x)$. Observe that 
\begin{eqnarray}
M_n(x) = \sum \limits_{i=0}^n \binom{n}{i}^2 x^{2i} = \binom{n}{i_x}^2 x^{2i_x}  \sum \limits_{i=0}^n \frac{\binom{n}{i}^2 x^{2i}}{\binom{n}{i_x}^2 x^{2i_x}}. 
\end{eqnarray}
We notice that for any $i=0,\ldots,n$
\begin{eqnarray*}
\binom{n}{i} \leq e^{nI(i/n)}.
\end{eqnarray*}
Thus  for any $i=0,\ldots,n$
\begin{eqnarray} \label{ljx}
\binom{n}{i}^2 x^{2i} \leq e^{2nJ_x(i/n)}.
\end{eqnarray}

{\it Observation (O1).} By \eqref{coc}, the function $J_x(t)$ is concave in $(0,1)$  and attains the maximum at $t=\tfrac{x}{x+1}$. Thus for any closed interval $A\subset (0,1)$, 
\begin{equation*}
\max \limits_{t \in A} J_x(t)= \max \limits_{t \in A_x} J_x(t),
\end{equation*}
with 
$$A_x=\{t: |t-\tfrac{x}{x+1}|=\min \limits_{s \in A} |s-\tfrac{x}{x+1}|\}.$$
{\bf Case 1. $i\leq i_x -i_x^{3/4}$, or $i_x+ i_x^{3/4} \leq i \leq  60 i_x$}. Then  by \eqref{eix},\eqref{ljx} and observation (O1), we have
\begin{eqnarray*}
\frac{\binom{n}{i}^2 x^{2i}}{\binom{n}{i_x}^2 x^{2i_x}} &\leq & 4\pi i_x  \exp \left( 2n \left[ J_x\left(\frac{i}{n}\right) -J_x\left(\frac{i_x}{n}\right) \right] \right) \notag \\
&\leq & 4\pi i_x  \exp \left( 2n \left[ J_x\left(\frac{i_x \pm i_x^{3/4}}{n}\right) -J_x\left(\frac{i_x}{n}\right) \right] \right).  \label{iix}
\end{eqnarray*}
 By Taylor expansion, 
\begin{eqnarray*}
J_x\left(\frac{i_x \pm i_x^{3/4}}{n}\right) -J_x\left(\frac{i_x}{n}\right)=\pm \frac{i_x^{3/4}}{n} J'_x\left(\frac{i_x}{n}\right) + J''_x(\nu_x) \frac{i_x^{3/2}}{2n^2}, 
\end{eqnarray*}
for some $\nu_x \in (\tfrac{i_x-i_x^{3/4}}{n},\tfrac{i_x+i_x^{3/4}}{n})$. Notice that
\begin{eqnarray}
\Big |J'_x\left(\frac{i_x}{n}\right) \Big |= \Big |J'_x\left(\frac{i_x}{n}\right)-J'_x\left(\frac{x}{x+1}\right) \Big | &\leq& \sup \limits_{y \in (\tfrac{i_x}{n}, \tfrac{x}{x+1})} |J''(y)| \Big |\frac{i_x}{n}-\frac{x}{x+1} \Big | \notag \\
&\leq& \frac{4}{nx}, \label{jpx}
\end{eqnarray}
by using \eqref{coc} and $|\tfrac{i_x}{n}-\tfrac{x}{x+1}|\leq \tfrac{1}{n}$. On the other hand,
\begin{eqnarray*}
J_x''(\nu_x)=\frac{-1}{\nu_x(1-\nu_x)} \leq \frac{-1}{\nu_x} \leq \frac{-n}{i_x+i_x^{3/4}} \leq \frac{-1}{2x}.
\end{eqnarray*}
Combining above estimates, we get
\begin{eqnarray*}
\frac{\binom{n}{i}^2 x^{2i}}{\binom{n}{i_x}^2 x^{2i_x}} &\leq & 4\pi i_x  \exp \left( \frac{8 i_x^{3/4}}{nx} - \frac{i_x^{3/2}}{2nx}\right) \leq 4\pi i_x  \exp \left(  - \frac{i_x^{3/2}}{4nx}\right) \leq 4\pi i_x e^{-\tfrac{\sqrt{2nx}}{16}}.
\end{eqnarray*}
Therefore, when $nx$ is large enough,
\begin{eqnarray}
M_{1,n}(x) &=& \sum \limits_{i\leq i_x -i_x^{3/4}} \frac{\binom{n}{i}^2 x^{2i}}{\binom{n}{i_x}^2 x^{2i_x}} + \sum \limits_{ i_x +i_x^{3/4} \leq i \leq 60 i_x} \frac{\binom{n}{i}^2 x^{2i}}{\binom{n}{i_x}^2 x^{2i_x}} \notag \\
&\leq& 61i_x \times 4\pi i_x e^{-\tfrac{\sqrt{2nx}}{16}} \leq 244 \pi (nx)^2 e^{-\tfrac{\sqrt{2nx}}{16}} \leq \frac{1}{nx}. \label{m1n}
\end{eqnarray}

{\bf Case 2. $i>60i_x$}. Using the same arguments as in Case 1, we can show that 
\begin{eqnarray*}
\frac{\binom{n}{i}^2 x^{2i}}{\binom{n}{i_x}^2 x^{2i_x}} &\leq &  4\pi i_x  \exp \left( 2n \left[ J_x\left(\frac{60i_x }{n}\right) -J_x\left(\frac{i_x}{n}\right) \right] \right)\\
&\leq& 4\pi i_x  \exp \left( 2n \left[ \frac{4}{nx} \frac{59i_x}{n} - \frac{1}{120 i_x} \frac{(59i_x)^2}{2n^2} \right]  \right) \\
&\leq & 4\pi i_x  \exp \left(472 - \frac{59^2 nx}{240} \right)\\
&\leq & 4\pi i_x  \exp \left(472 - \frac{59^2 \log n}{1440} \right) \leq 4 \pi i_x e^{-12 \log n/5} = 4 \pi i_x n^{-12/5}.
\end{eqnarray*}
Notice that for the last line, we assume that $n$ is large enough and $nx \geq \log n/6$. 
Therefore,
\begin{eqnarray}
M_{2,n}(x) &=&  \sum \limits_{  i > 60 i_x} \frac{\binom{n}{i}^2 x^{2i}}{\binom{n}{i_x}^2 x^{2i_x}} \leq n \times 4\pi i_x n^{-12/5} \leq n^{-1/5}. \label{m2n}
\end{eqnarray}

{\bf Case 3. $|i-i_x| \leq i_x^{3/4}$}.  By \eqref{doj}, noting that in this case $i$ and $i_x$ are of the same order of magnitude, we have  
\begin{eqnarray}
&& \frac{\binom{n}{i}^2 x^{2i}}{\binom{n}{i_x}^2 x^{2i_x}} =  (1+\kO(i_x^{-1})) \frac{i_x(n-i_x)}{i(n-i)}  \exp \left( 2n \left[ J_x\left(\frac{i}{n}\right) -J_x\left(\frac{i_x}{n}\right) \right] \right). \label{isix}
\end{eqnarray}
Since $J_x(t)$ is a concave function, 
\begin{eqnarray}
J_x\left(\frac{i}{n}\right) -J_x\left(\frac{i_x}{n}\right) \leq J_x'\left(\frac{i_x}{n}\right) \frac{(i-i_x)}{n}. \label{cbn}
\end{eqnarray}
On the other hand, by \eqref{jpx} 
\begin{eqnarray} \label{jxp}
|J_x'\left(\tfrac{i_x}{n}\right) (i-i_x)| \leq \frac{4|i-i_x|}{nx} \leq \frac{4 i^{3/4}_x}{nx} \leq \frac{4}{(i_x)^{1/4}}.
\end{eqnarray}
We are now in the position to prove (i). Indeed, from \eqref{isix}, \eqref{cbn} and \eqref{jxp}, we have
\begin{eqnarray*}
&& \frac{\binom{n}{i}^2 x^{2i}}{\binom{n}{i_x}^2 x^{2i_x}} \leq  (1+i_x^{-1/2}) \frac{i_x(n-i_x)}{i(n-i)}  \exp \left( 8i_x^{-1/4} \right) \leq (1+i_x^{-1/8}).
\end{eqnarray*} 
Hence,
\begin{eqnarray*}
&& 1\leq M_{3,n}(x) =\sum \limits_{|i-i_x| \leq i_x^{3/4}} \frac{\binom{n}{i}^2 x^{2i}}{\binom{n}{i_x}^2 x^{2i_x}} \leq  2i_x^{3/4}  (1+i_x^{-1/8}).
\end{eqnarray*} 
Combining this estimate with \eqref{m1n} and \eqref{m2n}, we obtain (i).

We now prove (ii). Assume that $x\in(n^{-1/6},1)$. Then using \eqref{isix} and Taylor expansion,
\begin{eqnarray*}
&& \frac{\binom{n}{i}^2 x^{2i}}{\binom{n}{i_x}^2 x^{2i_x}} =  (1+\kO(i_x^{-1})) \frac{i_x(n-i_x)}{i(n-i)}  \exp \left( 2n \left[ J_x\left(\frac{i}{n}\right) -J_x\left(\frac{i_x}{n}\right) \right] \right)\\
&=& (1+\kO(i_x^{-1/4}))  \exp \left( 2n \left[ J_x'\left(\frac{i_x}{n}\right) \frac{(i-i_x)}{n}+  J_x''\left(\frac{i_x}{n}\right) \frac{(i-i_x)^2}{2n^2} + J_x'''\left(\nu_{i,x}\right) \frac{(i-i_x)^3}{6n^3}  \right]  \right),
\end{eqnarray*}
for some $\nu_{i,x} \in (\tfrac{i_x}{n}, \tfrac{i}{n})$.  We notice that 
\begin{eqnarray*}
J_x'''(y) = \kO(y^{-2})\quad\text{for all}~~y\in \mathbb{R}.
\end{eqnarray*}
Therefore, by Taylor expansion, 
\begin{equation*}
J_x''\left(\frac{i_x}{n}\right)=J_x''\left(\frac{x}{x+1}\right) + \kO\left(\frac{1}{x^2}\right) \left(\frac{i_x}{n}-\frac{x}{x+1}\right) =J_x''\left(\frac{x}{x+1}\right) + \kO\left(\frac{1}{nx^2}\right),
\end{equation*}
and 
\begin{equation*}
J_x'''\left(\nu_{i,x}\right)=\kO\left(\frac{1}{x^2}\right).
\end{equation*}
Combining these estimates with \eqref{jxp}, we get 
\begin{eqnarray*}
  \tfrac{\binom{n}{i}^2 x^{2i}}{\binom{n}{i_x}^2 x^{2i_x}}  &=& (1+\kO(i_x^{-1/4}))  \exp \left( \kO(i_x^{-1/4}) + \left[   J_x''\left(\tfrac{x}{x+1}\right)  +  \kO\left(\tfrac{1}{nx^2}\right) + \kO\left( \tfrac{1}{x^2}\right) \left(\tfrac{i-i_x}{n}\right)\right]\tfrac{(i-i_x)^2}{n}   \right), \\
  &=& (1+\kO(n^{-5/24}))  \exp \left(  \left[   J_x''\left(\tfrac{x}{x+1}\right)  +  \kO\left(n^{-1/24}\right)\right]\tfrac{(i-i_x)^2}{n}   \right), 
\end{eqnarray*}
since $x\in(n^{-1/6},1)$ and $|i-i_x|\leq i_x^{3/4} =\kO((nx)^{3/4})$. Therefore, by using $J_x''(\tfrac{x}{x+1})=\tfrac{-(x+1)^2}{x}$ and integral approximations, we can prove that 
\begin{eqnarray*}
M_{3,n}(x)&=& \sum \limits_{|i-i_x|\leq i_x^{3/4}}   \tfrac{\binom{n}{i}^2 x^{2i}}{\binom{n}{i_x}^2 x^{2i_x}} \\
&=& (1+\kO(n^{-5/24})) \sum \limits_{|j|\leq i_x^{3/4}}  \exp \left(  \left[ \tfrac{-(x+1)^2}{x} +\kO(n^{-1/24}) \right]  \tfrac{j^2}{n}   \right)\\
&=& (1+\kO(n^{-5/24})) \sum \limits_{|j|\leq \left(\tfrac{nx}{x+1}\right)^{3/4}}  \exp \left(   -  \left( j \tfrac{\sqrt{\tfrac{(x+1)^2}{x}+\kO(n^{-1/24})}}{\sqrt{n}} \right)^2   \right) \\
&=& (1+\kO(n^{-5/24})) \tfrac{\sqrt{n}}{\sqrt{\tfrac{(x+1)^2}{x}+\kO(n^{-1/24})}}  \left( \int _{-\infty}^{\infty} e^{-t^2}dt + \kO(e^{-(nx)^{1/4}}) \right) \\
&=& (1+\kO(n^{-1/24})) \frac{\sqrt{\pi nx}}{x+1}.
\end{eqnarray*}
In conclusion, we have
\begin{eqnarray*}
\sum \limits_{i=0}^n \binom{n}{i}^2 x^{2i} &=&\binom{n}{i_x}^2 x^{2i_x} (M_{1,n}(x)+M_{2,n}(x)+M_{3,n}(x)) \\
 &=&(1+\kO(n^{-1/24})) \binom{n}{i_x}^2 x^{2i_x} \frac{\sqrt{\pi nx}}{x+1}\\
  &=&(1+\kO(n^{-1/24})) \frac{(x+1)^{2n+1}}{2\sqrt{\pi nx}}.
\end{eqnarray*}
Part (ii) follows.
\end{proof}
\begin{rem}[Asymptotic behaviour of $M_n$ via the Legendre polynomial] The asymptotic formula of $M_n$ can also be calculated using Legendre polynomials as follows. Legendre polynomials, denoted by $L_n(x)$, are solutions to Legendre's differential equation
\begin{equation*}
\frac{d}{dx}\left[(1-x^2)\frac{d}{dx}L_n(x)\right]+n(n+1)L_n(x)=0,
\end{equation*}
with initial data $L_0(x)=1, \ \ L_1(x)=x$.  They have been used widely in physics and engineering and have many interesting properties, see \cite{BO99} for more information. For instance, $L_n$ has the following explicit representation 
\begin{equation*}
L_n(x)=\frac{1}{2^n}\sum_{i=0}^{n}\begin{pmatrix}
n\\
i
\end{pmatrix}^2(x-1)^{n-i}(x+1)^i.
\end{equation*}
According to~\cite[Lemma 3]{DuongHanJMB2016}, the polynomial $M_n$ defined in \eqref{eq: Mn} and the Legendre polynomial $L_n$ satisfy the following relation
\begin{equation}
\label{eq: Mn and Legendre}
M_n(x)=(1-x^2)^n L_n\Big(\frac{1+x^2}{1-x^2}\Big).
\end{equation}
According to \cite[Example 2, page 229]{BO99} (see also~\cite{Wang12}), the Legendre polynomial $L_n$ satisfies the following asymptotic behaviour as $n\to\infty$ for any $x>1$,
\begin{equation}
\label{eq: asymptotic of Pn}
L_n(x)\sim \frac{1}{\sqrt{2\pi n}}\frac{\Big(x+\sqrt{x^2-1}\Big)^{n+\frac{1}{2}}}{(x^2-1)^{\frac{1}{4}}},\qquad\text{for}~~ x>1.
\end{equation}
From \eqref{eq: Mn and Legendre} and \eqref{eq: asymptotic of Pn}, we obtain the following asymptotic behaviour for $M_n(x)$ as $n\to\infty$ for any $0<x<1$
\begin{equation*}
M_n(x)\sim \frac{(x+1)^{2n+1}}{2 \sqrt{\pi n x}}.
\end{equation*}
This is the result obtained in  Part (ii) of Lemma \ref{lom}. However, that part provided a stronger statement offering a quantitative estimate.
\end{rem}

 By transforming $x=\tan^2(t/2\sqrt{n})$ and $y =\tan^2(s/2\sqrt{n})$, we will show that for  $x, y \in [n^{-1/6}, n^{1/6}]$, the autocorrelation $A_n(x,y)$ is close to  $e^{-(t-s)^2/4}$.   It means that the sequence of  random polynomials  $(f_n(x))$ converges weakly to the centered stationary Gaussian process $Z(t)$  with covariance function $R(t)=e^{-t^2/4}$. Then by heuristic arguments,   the persistence probability of $f_n$ should tend to the corresponding one of $Z(t)$.  To ensure the continuity of persistence exponents, we need some restrictive conditions on the  autocorrelation function. The following result which is a combination of Theorem 1.6 in \cite{DM} and Lemma 3.1 in \cite{DM2016} gives us such conditions.

\begin{lem} \label{lem1}  Let $\kS_{+}$ be the class of all non-negative autocorrelation functions. Then the following statements hold.
\begin{itemize}
\item[(a)] For a centered stationary Gaussian process $\{Z_t\}_{t\geq 0}$ of autocorrelation function $
A(s, t) = A(0, t-s) \in \kS_+$, the  nonnegative limit 
$$b(A)= - \lim_{T \rightarrow \infty} \frac{\log \pp (\inf_{0 \leq t \leq T} Z(t) >0)}{T} ,$$
exists.
\item[(b)]  Let $\{Z_t^{(k)}\}_{t\geq 0},\, 1\leq k \leq \infty$ be a sequence of centered Gaussian processes of unit variance and nonnegative autocorrelation functions $A_k(s,t)$, such that $A_{\infty}(s,t) \in \kS_+$. Assume that the following conditions hold
\begin{itemize}
\item[(b1)] We have $A_k(s,s+\tau)\rightarrow A_{\infty}(0,\tau)$ uniformly in $s\geq 0$, when $k \rightarrow \infty$.
\item[(b2)] For some $\alpha >1$,
\begin{equation*}\label{con1}
\underset{k,\tau \rightarrow \infty}{\lim\sup} \, \underset{s\geq 0}{\sup} \left\{\tau^{\alpha}\log A_k(s,s+\tau) \right\} <-1.
\end{equation*}
\item[(b3)] There exists $\eta>1$ such that
\begin{equation*}\label{con3}
\underset{u\downarrow 0}{\lim\sup}|\log u|^{\eta} \underset{1\leq k \leq \infty}{\sup}p_k^2(u)<\infty,
\end{equation*}
where $p_k^2(u):=2-2\inf_{s\geq 0, \tau \in [0,u]}A_k(s,s+\tau)$.
\item[(b4)] The function $A_{\infty}(0,\tau)$ is non-increasing and satisfies that for any finite $h>0$ and $\theta\in (0,1)$
\begin{equation*}\label{con4}
a^2_{h,\theta}=\underset{0<t\leq h}{\inf} \left\{ \frac{A_{\infty}(0,\theta t) -A_{\infty}(0,t)}{1-A_{\infty}(0,t)} \right\} >0.
\end{equation*}
\end{itemize}
  Then we have
\begin{equation*}
\underset{k,T \rightarrow \infty}{\lim} \frac{1}{T}\log \pp \left(Z_t^{(k)}>0,\, \forall t\in [0,T] \right)=-b(A_{\infty}).
\end{equation*}
\end{itemize} 
\end{lem}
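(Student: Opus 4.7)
The plan is to address the two parts separately, using Slepian's comparison and standard Gaussian regularity.

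For Part~(a), set $p(T) := \pp(\inf_{0 \leq t \leq T} Z(t) > 0)$. By stationarity and the nonnegativity of $A \in \kS_+$, Slepian's inequality (applied to the finite-dimensional distributions of $Z$ on $[0,T]$ and $[T,T+S]$ and then passed to the limit through a countable dense subset) yields
\[
p(T+S) \geq p(T)\, p(S).
\]
Consequently $T \mapsto -\log p(T)$ is subadditive and nonnegative, so Fekete's lemma delivers the existence of
\[
b(A) = \lim_{T \to \infty} \frac{-\log p(T)}{T} = \inf_{T > 0} \frac{-\log p(T)}{T} \in [0, \infty).
\]

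For Part~(b), the strategy is to sandwich the persistence probability $q_k(T) := \pp(Z^{(k)}_t > 0, \forall t \in [0,T])$ between bounds whose logarithms, divided by $T$, both tend to $-b(A_\infty)$ jointly as $k, T \to \infty$. The upper bound proceeds by discretizing $[0,T]$ on a fine mesh and then partitioning the mesh into blocks of length $L \gg 1$; hypothesis (b2) forces cross-block correlations to decay like $\tau^{-\alpha}$ with $\alpha > 1$, so Slepian's comparison renders the blocks nearly independent, while hypothesis (b1) permits replacing $A_k$ by $A_\infty$ within each block without affecting the exponential rate. This yields $\limsup_{k,T \to \infty} T^{-1} \log q_k(T) \leq -b(A_\infty)$.

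For the matching lower bound, one has to pass from positivity at mesh points to positivity on the whole interval. Hypothesis (b3) yields, through a chaining argument, a uniform control on the Gaussian modulus of continuity of $Z^{(k)}$, so that excursions below zero between mesh points can be excluded at exponentially small cost; hypothesis (b4) supplies the nondegeneracy constant $a^2_{h,\theta} > 0$ that makes the conditional small-ball probability given positive mesh values uniformly nontrivial. Combined with the same block decomposition, this produces $\liminf_{k,T \to \infty} T^{-1} \log q_k(T) \geq -b(A_\infty)$.

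The main obstacle is the continuous-to-discrete passage in the lower bound: the mesh must be fine enough that the modulus-of-continuity error is negligible, yet the block length $L$ must be large enough that cross-block correlations become summable via (b2), uniformly in $k$. The conditions (b1)--(b4) are calibrated precisely so that such a balance exists; the explicit construction is already carried out in \cite{DM, DM2016}, whose framework I would follow directly rather than reinvent.
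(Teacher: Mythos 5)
The paper does not give a proof of this lemma; it is quoted verbatim as ``a combination of Theorem~1.6 in \cite{DM} and Lemma~3.1 in \cite{DM2016}'', so there is no in-paper argument to compare against. Given that, let me assess your proposal on its own terms.

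Your Part~(a) is correct and is precisely the standard argument in \cite{DM}: nonnegativity of $A$ plus stationarity plus Slepian's inequality gives the supermultiplicativity $p(T+S)\geq p(T)\,p(S)$ (compare $Z$ on $[0,T+S]$ with the process that agrees with $Z$ in law on $[0,T]$ and on $[T,T+S]$ separately but has the two pieces independent; the cross-covariances are lowered from $A\geq 0$ to $0$, marginal variances match, so Slepian applies), and then Fekete's lemma gives existence of $\lim_{T\to\infty}(-\log p(T))/T=\inf_{T>0}(-\log p(T))/T\in[0,\infty)$. One small formality you might make explicit: Slepian is stated for finite-dimensional vectors, so you pass to the infimum over a countable dense set and use continuity of sample paths; and Fekete is for sequences, so you either invoke a continuous-parameter version or restrict to $T\in\mathbb{N}$ and interpolate, but both are routine.

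For Part~(b) you offer a structural sketch rather than a proof, and you are explicit that the detailed construction is to be taken from \cite{DM,DM2016}. That is in fact exactly what the paper itself does. Your attribution of roles to the hypotheses is broadly right at the level of a road map: (b1) allows replacement of $A_k$ by $A_\infty$ at the exponential scale, (b2) provides the polynomial (summable, since $\alpha>1$) decay of long-range correlations that makes a block/Slepian decoupling work, (b3) gives a modulus-of-continuity control uniform in $k$ to pass from a mesh to the continuum, and (b4) is the local nondegeneracy of the limiting correlation needed for the small-ball/interpolation step in the lower bound. I would only caution that this remains an outline: the actual proofs in \cite{DM,DM2016} involve additional bookkeeping (e.g.\ a careful choice of mesh $\delta=\delta(T)$, the $\epsilon$-perturbed processes, and a two-sided comparison using both Slepian and the Gaussian correlation-type inequalities) that your sketch gestures at but does not carry out. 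Since the lemma in the paper is imported, not proved, your treatment is aligned with the paper; just be aware that Part~(b) as written would not stand alone as a self-contained proof.
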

While Lemma \ref{lem1} shows the convergence of persistence exponent of general Gaussian processes under strict conditions of autocorrelation functions, Lemma \ref{ldm} below provides a lower bound on the persistence probability of a differentiable Gaussian process $Z(t)$, assuming a simple condition that the variances of $Z(t)$ and $Z'(t)$ are comparable. 
\begin{lem}  \label{ldm} \cite[Lemma 4.1]{DM} There is a universal constant $\mu \in (0,1)$, such that the following statements hold.
\begin{itemize}
\item[(i)] If $(Z_t)_{t \in [a,b]}$ is a differentiable centered
Gaussian process satisfying 
\begin{equation*}
2(b-a)^2 \sup \limits_{t \in [a,b]} \E (Z_t'^2) \leq \sup \limits_{t \in [a,b]} \E (Z_t^2), 
\end{equation*} 
then 
\begin{equation*}
\pp \left( \inf_{t \in [a,b]} Z_t > 0 \right) \geq \mu.
\end{equation*}
\item[(ii)] If $(Z_t)_{t \in [0, T]}$ is a differentiable centered Gaussian process with nonnegative autocorrelation function satisfying for all $t \leq T$
\begin{equation*}
2\vartriangle^2  \E (Z_t'^2) \leq \E (Z_t^2), 
\end{equation*} 
for some positive constant $\vartriangle$, then 
\begin{equation*}
\pp \left( \inf_{t \in [0,T]} Z_t > 0 \right) \geq \mu^{\lceil \frac{T}{\vartriangle}\rceil}.
\end{equation*}
\end{itemize}
\end{lem}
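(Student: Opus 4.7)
My approach for part (i) is a ``large central value plus controlled oscillation'' strategy. Set $\sigma^2 := \sup_{t \in [a,b]} \E(Z_t^2)$ and pick $t^* \in [a,b]$ where this supremum is (essentially) attained, so $Z_{t^*}/\sigma$ is standard Gaussian. Writing $Z_t = Z_{t^*} + \int_{t^*}^{t} Z_s'\,ds$, Cauchy--Schwarz gives
\[
S := \sup_{t\in[a,b]}|Z_t - Z_{t^*}| \;\leq\; \sqrt{b-a}\,\Bigl(\int_a^b Z_s'^2\,ds\Bigr)^{1/2},
\]
so $\E(S^2) \leq (b-a)^2\sup_t\E(Z_t'^2) \leq \sigma^2/2$ by hypothesis; in particular $\E(S) \leq \sigma/\sqrt{2}$. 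Since $S$ is the supremum of the centered Gaussian field $\{Z_t - Z_{t^*}\}_{t\in[a,b]}$ whose pointwise variance is at most $\sigma^2/2$, Borell's Gaussian concentration inequality yields the subgaussian tail $\pp(S \geq R\sigma) \leq \exp\!\bigl(-(R-1/\sqrt{2})^2\bigr)$ for $R > 1/\sqrt{2}$. Comparing this with the Gaussian lower tail $\pp(Z_{t^*} > R\sigma) \gtrsim e^{-R^2/2}/R$, for a sufficiently large universal $R$ (for instance $R=5$) the Gaussian tail dominates the Borell tail, since $(R-1/\sqrt{2})^2$ strictly exceeds $R^2/2 + \log R$. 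On the event $\{Z_{t^*} > R\sigma\} \cap \{S < R\sigma\}$ one has $\inf_t Z_t \geq Z_{t^*} - S > 0$, hence
\[
\pp\bigl(\inf_{t\in[a,b]} Z_t > 0 \bigr) \;\geq\; \pp(Z_{t^*} > R\sigma) - \pp(S \geq R\sigma) \;\geq\; \mu
\]
for a universal $\mu \in (0,1)$ that is independent of the particular process.

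For part (ii), I would partition $[0,T]$ into $k = \lceil T/\vartriangle\rceil$ subintervals $I_j$ each of length at most $\vartriangle$. The assumption $2\vartriangle^2\E(Z_t'^2) \leq \E(Z_t^2)$ delivers the hypothesis of part (i) on each $I_j$, yielding $\pp(\inf_{t\in I_j} Z_t > 0) \geq \mu$. Because $Z$ has nonnegative autocorrelation, the Khatri--Sidak inequality (applied to the closed convex symmetric sets $\{\sup_{t\in I_j}(-Z_t) < c\}$ via finite-dimensional approximation and $c\downarrow 0$) allows the events to be multiplied:
\[
\pp\bigl(\inf_{t\in[0,T]} Z_t > 0\bigr) = \pp\Bigl(\bigcap_{j=1}^{k} \{\inf_{t \in I_j} Z_t > 0\}\Bigr) \geq \prod_{j=1}^{k} \pp\bigl(\inf_{t\in I_j} Z_t > 0\bigr) \geq \mu^{k}.
\]

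The principal obstacle is pinning down a single universal threshold $R$ (hence $\mu$) in part (i) that works simultaneously for every process satisfying the hypothesis. This reduces to the elementary scale-free comparison between the Gaussian exponent $R^2/2$ and the Borell exponent $(R-1/\sqrt{2})^2 = R^2 - R\sqrt{2} + 1/2$, which cross at $R = \sqrt{2}+1$; the factor $2$ built into the hypothesis $2(b-a)^2\sup\E(Z_t'^2) \leq \sup\E(Z_t^2)$ is precisely what allows a universal $R$ to exist. The remaining steps (measurable selection of $t^*$, the $c \downarrow 0$ limit in the correlation inequality, the continuity of the paths) are standard.
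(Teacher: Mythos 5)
Your argument for part (i) is a complete, self-contained proof (Cauchy--Schwarz pathwise control of the oscillation $S$, Borell concentration, and a scale-free Gaussian tail comparison at a universal threshold $R$), whereas the paper simply cites \cite[Lemma~4.1]{DM} and gives no proof of (i). The argument is sound: $\E(S^2)\leq (b-a)^2 \sup_t \E(Z_t'^2) \leq \sigma^2/2$, and after applying Borell one compares the exponents $(R-1/\sqrt{2})^2$ against $R^2/2+\log R$ and finds a universal $R$, hence a universal $\mu$. Up to a harmless factor of $2$ in the sub-Gaussian tail (from the two-sidedness of $S$), this is correct and more informative than the paper's bare citation.

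Part (ii) contains a genuine misstep. The sets $\{\sup_{t \in I_j}(-Z_t) < c\} = \{Z_t > -c \ \forall\, t\in I_j\}$ are convex but \emph{not} symmetric: the reflection $z\mapsto -z$ sends such a set to $\{Z_t < c\ \forall\, t\in I_j\}$, a different set. The Khatri--Sidak inequality concerns centered Gaussian measure of intersections of \emph{symmetric} convex sets and, notably, requires no sign condition on the covariance, so it neither applies to these one-sided events nor uses the hypothesis you correctly flagged as essential. The right tool --- and the reason the lemma assumes nonnegative autocorrelation --- is Slepian's comparison inequality: let $\tilde Z$ be the Gaussian process on $[0,T]$ that keeps the within-block covariance on each $I_j$ but zeroes the cross-block covariances (block-diagonal restriction of a PSD kernel is PSD, so $\tilde Z$ exists). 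Since the true cross-block covariances are nonnegative and hence dominate those of $\tilde Z$, Slepian yields
\[
\pp\bigl(\inf_{[0,T]}Z>0\bigr)\;\geq\;\pp\bigl(\inf_{[0,T]}\tilde Z>0\bigr)\;=\;\prod_{j=1}^{k}\pp\bigl(\inf_{I_j}Z>0\bigr)\;\geq\;\mu^{k},
\]
which is exactly the route the paper takes (``Slepian's lemma and (i)''). Replacing your Khatri--Sidak citation with this Slepian comparison repairs (ii); the decomposition into $k=\lceil T/\vartriangle\rceil$ subintervals and the check that the pointwise hypothesis $2\vartriangle^2\E(Z_t'^2)\leq\E(Z_t^2)$ delivers (i)'s hypothesis on each subinterval are both correct as you wrote them.
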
  
\begin{proof}
Part (i) is exactly Lemma 4.1 in \cite{DM}. Part (ii) is a direct consequence of (i). Indeed, we divide the interval $[0, T]$ into $\lceil \tfrac{T}{\vartriangle}\rceil$ small intervals of length $\vartriangle$. Then the condition of (i) is verified in each small interval. Thus using Slepian's lemma and (i), we get (ii).
\end{proof}
\section{Proof of Theorem \ref{mth}}
In this section, we prove the main theorem, Theorem \ref{mth}, using preliminary lemmas in Section \ref{sec: prel}. The proof consists of three steps. In Subsection~\ref{sec: negligible} we show that the contribution to the persistence probability of two intervals $(0,n^{-1/6})$ and $(n^{1/6},\infty)$ is negligible. Then in Subsection \ref{sec3}, we compute the persistence exponent from the main interval $(n^{-1/6},n^{1/6})$. Finally, by bringing two previous steps together, we conclude the proof in Subsection \ref{sec: conc}.
\label{sec: proof}
\subsection{Negligible intervals}
\label{sec: negligible}
 In this part, we show that the contribution of intervals $(0,n^{-1/6})$ and $(n^{1/6},\infty)$ to the persistence exponent is negligible.
\begin{prop}  \label{pne}
We have
\begin{itemize}
\item[(i)] $\lim \limits_{n\rightarrow \infty} \frac{1}{\sqrt{n} }\log \pp \left(f_n(x)>0 \quad \forall x \in (0, n^{-1/6})\right) =0$,
\item[(ii)] $\lim \limits_{n\rightarrow \infty} \frac{1}{\sqrt{n} }\log \pp \left(f_n(x)>0 \quad \forall x \in (n^{1/6}, \infty)\right) =0$.
\end{itemize}
\end{prop}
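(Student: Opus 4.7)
The plan is to apply Lemma~\ref{ldm}(ii) to a normalized and rescaled version of $f_n$ for (i), and to reduce (ii) to (i) by a polynomial symmetry $x \leftrightarrow 1/x$.

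For (i), perform the change of variable $x = \tan^2(t/(2\sqrt n))$, which maps $(0, n^{-1/6})$ onto $(0, T_n)$ with $T_n := 2\sqrt n\,\arctan(n^{-1/12}) = \kO(n^{5/12})$. Define the normalized Gaussian process
\[
 Y_n(t) := \frac{f_n\left(\tan^2(t/(2\sqrt n))\right)}{\sqrt{M_n\left(\tan^2(t/(2\sqrt n))\right)}}.
\]
This is a differentiable centered Gaussian process of unit variance; its autocorrelation, being a ratio of (strictly positive) values of $M_n$, is nonnegative, so Lemma~\ref{ldm}(ii) is applicable once we bound $\E Y_n'(t)^2$ uniformly.

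The core analytic step is to prove $\sup_{t \in (0, T_n)} \E Y_n'(t)^2 \leq C$ for some constant $C$ independent of $n$. Introducing the probability measure $\pi_x(i) := \binom{n}{i}^2 x^{2i}/M_n(x)$ on $\{0, 1, \ldots, n\}$, a direct computation (using $\E F F' = (\E F^2)'/2$ for $F(t) = f_n(x(t))$) yields
\[
 \E Y_n'(t)^2 = \Big(\frac{x'(t)}{x(t)}\Big)^2 \operatorname{Var}_{\pi_{x(t)}}(i).
\]
The elementary trigonometric identities
\[
 \Big(\frac{x'(t)}{x(t)}\Big)^2 = \frac{4}{n\sin^2(t/\sqrt n)}, \qquad \frac{x}{(1+x)^2} = \frac{\sin^2(t/\sqrt n)}{4}
\]
then reduce the goal to establishing
\[
 \operatorname{Var}_{\pi_x}(i) \leq C \, \frac{nx}{(1+x)^2}, \qquad x \in (0, n^{-1/6}].
\]
For $x \in [n^{-1/6}, 1]$ this follows from the Gaussian concentration of $\pi_x$ around $i_x = nx/(x+1)$ with width of order $\sqrt{nx}/(1+x)$, which is a byproduct of Case~3 in the proof of Lemma~\ref{lom}(ii). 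For $x < n^{-1/6}$ the mass of $\pi_x$ is essentially supported near $i = 0$; in this regime one obtains the bound by the tail estimates analogous to Cases~1--2 of the proof of Lemma~\ref{lom}, together with the elementary observation that $\operatorname{Var}_{\pi_x}(i) = \kO((nx)^2)$ when $nx \leq 1$.

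Given this bound, applying Lemma~\ref{ldm}(ii) with $\Delta = 1/\sqrt{2C}$ produces
\[
 \pp\bigl(f_n(x) > 0\,\,\forall x \in (0, n^{-1/6})\bigr) = \pp\bigl(Y_n(t) > 0\,\,\forall t \in (0, T_n)\bigr) \geq \mu^{\lceil T_n/\Delta \rceil} \geq \mu^{C' n^{5/12}}.
\]
Taking logarithms, dividing by $\sqrt n$, and using $n^{5/12}/\sqrt n \to 0$ together with the trivial bound $\log \pp \leq 0$ gives (i).

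For (ii), use the polynomial involution $\widetilde f_n(y) := y^n f_n(1/y) = \sum_{j=0}^n \binom{n}{j} a_{n-j}\, y^j$, which has the same distribution as $f_n$ since $(a_{n-j})_{j=0}^n$ and $(a_j)_{j=0}^n$ are equal in law. Substituting $y = 1/x$, and noting $y^n > 0$ for $y > 0$, yields
\[
 \pp\bigl(f_n > 0 \text{ on } (n^{1/6}, \infty)\bigr) = \pp\bigl(\widetilde f_n > 0 \text{ on } (0, n^{-1/6})\bigr) = \pp\bigl(f_n > 0 \text{ on } (0, n^{-1/6})\bigr),
\]
so (ii) is reduced to (i).

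The main technical obstacle is the uniform variance estimate $\operatorname{Var}_{\pi_x}(i) \leq Cnx/(1+x)^2$ across the full range $x \in (0, n^{-1/6}]$, particularly in the transition regime where Lemma~\ref{lom}(ii) does not apply directly; handling this requires a careful extension of the tail and Gaussian estimates of the proof of Lemma~\ref{lom} down to smaller values of $x$.
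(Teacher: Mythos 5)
Your approach is genuinely different from the paper's, and it is cleaner in spirit, but it rests on an analytic estimate that you acknowledge you have not established, and which is not a direct byproduct of Lemma~\ref{lom}.

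The paper splits $(0,n^{-1/6})$ into \emph{three} subintervals. On $(0,\tfrac{\log n}{6n})$ it uses an entirely separate elementary argument: with probability bounded below by roughly $\exp(-Cn^{o(1)})$, the constant coefficient $a_0$ alone dominates $\max_i|a_i|\cdot(1+\tfrac{\log n}{6n})^n$, forcing positivity. On $(\tfrac{\log n}{6n},n^{-1/2})$ and $(n^{-1/2},n^{-1/6})$ it applies Lemma~\ref{ldm}(ii) to $g_n(x)=(x+1)^{-n}f_n(x)$ in the $x$-variable, bounding $\E(g_n'^2)/\E(g_n^2)$ by the crude estimate $3n^{3/4}/x^{5/4}$ obtained directly from Lemma~\ref{lom}(i). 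You instead pass to the $t$-variable $x=\tan^2(t/(2\sqrt n))$, normalize to the unit-variance process $Y_n$, and apply Lemma~\ref{ldm}(ii) once to a $t$-interval of length $\kO(n^{5/12})$. Your derivative identity $\E Y_n'(t)^2=(x'/x)^2\operatorname{Var}_{\pi_x}(i)$ is correct, and the trigonometric cancellation $(x'/x)^2\cdot nx/(1+x)^2\equiv 1$ is exactly what makes the single-interval argument feasible. What this buys is avoiding the paper's two-scale decomposition and the ad~hoc small-$x$ argument; what it costs is that you now need the sharper and more uniform statement $\operatorname{Var}_{\pi_x}(i)\leq Cnx/(1+x)^2$ for \emph{all} $x\in(0,n^{-1/6}]$, including the regime $nx=\kO(\log n)$ and the transition $1\ll nx\ll n^{5/6}$ which lies strictly outside the hypotheses of both parts of Lemma~\ref{lom}.

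The gap is therefore concrete: Lemma~\ref{lom}(ii) only gives the Gaussian concentration of $\pi_x$ for $x\geq n^{-1/6}$, which is the complement of your range, and Lemma~\ref{lom}(i) gives bounds on $M_n(x)$ but says nothing directly about $\operatorname{Var}_{\pi_x}(i)$. Your observation that $\operatorname{Var}_{\pi_x}(i)=\kO((nx)^2)\leq nx$ when $nx\leq 1$ handles the very bottom of the range, and the Gaussian regime handles $x$ near $n^{-1/6}$, but the intermediate range (say $nx\in[\log n,\,n^{5/6}]$) needs its own argument; you flag this yourself in your final paragraph. The bound is almost certainly true — $\pi_x$ is a log-concave sequence, so sub-Gaussian tail estimates of the type in Cases~1--2 of Lemma~\ref{lom} should give $\operatorname{Var}_{\pi_x}(i)\lesssim nx$ — but as written the proposal has a genuine, self-acknowledged hole at its analytic core, whereas the paper circumvents this by its three-way split and the separate argument for $(0,\tfrac{\log n}{6n})$. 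Your part~(ii) reduction via $\widetilde f_n(y)=y^nf_n(1/y)$ coincides with the paper's.

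Two smaller remarks. First, to apply Lemma~\ref{ldm}(ii) you also need nonnegativity of the autocorrelation; your one-line justification is fine since $\E[f_n(x)f_n(y)]=M_n(\sqrt{xy})>0$. Second, if you do establish the uniform variance bound, your single application of Lemma~\ref{ldm}(ii) gives $\log\pp\geq -C'n^{5/12}$, which after dividing by $\sqrt n$ vanishes at rate $n^{-1/12}$; the paper's piecewise argument, by contrast, gets rates that depend on the interval and involve somewhat delicate exponent bookkeeping, so your route, once the gap is filled, is arguably more transparent.
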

\begin{proof}
Part (ii) is a consequence of (i). Indeed, we have
\begin{eqnarray*}
\pp \left(f_n(x)>0 \quad \forall x \in (n^{1/6}, \infty)\right) = \pp \left(f_n(x)>0 \quad \forall x \in (0, n^{-1/6})\right),
\end{eqnarray*}
since for $x>0$,
$$f_n(\tfrac{1}{x})=\frac{1}{x^n}\sum_{i=0}^{n} \binom{n}{i}a_ix^{n-i}\mathop{=}^{(\kL)}\frac{f_n(x)}{x^n}. $$ 
Now it remains to prove (i). Since the upper bound that $$\log \pp \left(f_n(x)>0 \quad \forall x \in (0,n^{-1/6})\right) \leq \log 1 =0$$ is trivial, we only need to show the lower bound
 \begin{eqnarray} \label{lp1}
 \liminf \limits_{n \rightarrow \infty} \frac{1}{\sqrt{n} }\log \pp \left(f_n(x)>0 \quad \forall x \in (0,n^{-1/6})\right) \geq 0.
 \end{eqnarray}
 By Slepian's lemma, \eqref{lp1} follows from the following lower bounds,
 \begin{eqnarray}
 && \liminf \limits_{n \rightarrow \infty} \frac{1}{\sqrt{n} }\log \pp \left(f_n(x)>0 \quad \forall x \in (0,\tfrac{\log n}{6n})\right) \geq 0, \label{lp1a}\\
&&  \liminf \limits_{n \rightarrow \infty} \frac{1}{\sqrt{n} }\log \pp \left(f_n(x)>0 \quad \forall x \in (\tfrac{\log n}{6n}, n^{-1/6})\right) \geq 0. \label{lp1b}
 \end{eqnarray}
 We first prove \eqref{lp1a}. We observe that 
 \begin{eqnarray}
 \pp \left(f_n(x)>0 \quad \forall x \in (0,\tfrac{\log n}{6n})\right) &\geq& \pp \left(a_0 > \Big|\sum_{i=1}^{n} \binom{n}{i}a_ix^{i} \Big| \quad \forall x \in (0,\tfrac{\log n}{6n})\right) \notag \\
 &\geq & \pp \left(a_0 > \max \limits_{1 \leq i \leq n} |a_i| \times \sum_{i=1}^{n} \binom{n}{i}x^{i}  \quad \forall x \in (0,\tfrac{\log n}{6n})\right) \notag \\
  &\geq & \pp \left(a_0 > \max \limits_{1 \leq i \leq n} |a_i| \times \left(1+ \tfrac{\log n}{6n} \right)^n \right) \notag \\
    &\geq & \pp \left(a_0 > \log n \times \left(1+ \tfrac{\log n}{6n} \right)^n \right) \times \pp \left(\max \limits_{1 \leq i \leq n} |a_i| \leq \log n \right) \notag \\
    &=& (1-\Phi(\xi_n)) \times (\Phi(\log n))^n, \label{h1}
 \end{eqnarray}
 where $\Phi(x)$ is the normal distribution function, and 
 $$\xi_n=\log n \times \left(1+ \tfrac{\log n}{6n} \right)^n.$$
 We notice that $\log (1-\Phi(x))=(\tfrac{-1}{2}+o(1))x^2$ as $x\rightarrow \infty$. Therefore, for $n$ large enough,
 \begin{eqnarray} \label{h2}
 (\Phi(\log n))^n \geq \left(1-e^{-\log^2 n/4 } \right)^n \geq 1/2,
 \end{eqnarray}
 and 
\begin{eqnarray} \label{h3}
\liminf \limits_{n\rightarrow \infty}\frac{\log (1-\Phi(\xi_n))}{\sqrt{n}} \geq \liminf \limits_{n\rightarrow \infty} \frac{-\xi_n^2}{\sqrt{n}} \geq \liminf \limits_{n\rightarrow \infty} \frac{- \log^2 n \times e^{\log n/3}}{\sqrt{n}} =0.  
 \end{eqnarray} 
 Combining \eqref{h1}, \eqref{h2} and \eqref{h3}, we get \eqref{lp1a}. We now prove \eqref{lp1b}. Let us define 
 $$g_n(x)=(x+1)^{-n}f_n(x).$$
 Then 
 \begin{eqnarray*}
 g_n'(x)&=&(x+1)^{-n} \left(f_n'(x)-\frac{n}{x+1}f_n(x) \right)\\
 &=& (x+1)^{-n} \sum \limits_{i=0}^n \binom{n}{i} a_i x^i \left( \tfrac{i}{x}-\tfrac{n}{x+1}\right).  
 \end{eqnarray*}
 Using Lemma \ref{lom}, we have 
 \begin{eqnarray} \label{eg}
 \E(g_n(x)^2)=(x+1)^{-2n} M_n(x) \geq  (x+1)^{-2n}   \binom{n}{i_x}^2x^{2i_x},
 \end{eqnarray}
 with 
$$i_x=\left[ \frac{nx}{x+1}\right].$$
Using the same arguments as in Lemma \ref{lom}, we can also prove that 
 \begin{eqnarray}
 \E(g_n'(x)^2) &=& (x+1)^{-2n} \sum \limits_{i=0}^n \binom{n}{i}^2  x^{2i} \left( \tfrac{i}{x}-\tfrac{n}{x+1}\right)^2 \notag\\
 &\leq&  (x+1)^{-2n}   3i_x^{3/4} \binom{n}{i_x}^2x^{2i_x}\left( \tfrac{i_x}{x}-\tfrac{n}{x+1}\right)^2. \label{egp}
 \end{eqnarray}
 Combining \eqref{eg} and \eqref{egp}, we obtain 
 \begin{eqnarray} \label{ggp}
 \E(g_n'(x)^2) \leq 3i_x^{3/4} \left( \frac{i_x}{x}-\frac{n}{x+1}\right)^2 \E(g_n(x)^2) \leq \frac{3n^{3/4}}{x^{5/4}} \E(g_n(x)^2).
 \end{eqnarray}
 Thus for $x\in (\tfrac{\log n}{6n},\tfrac{1}{\sqrt{n} })$,
 \begin{eqnarray*}
2 \vartriangle_{1,n}^2 \E(g_n'(x)^2) \leq  \E(g_n(x)^2),
 \end{eqnarray*}
 with 
 $$\vartriangle_{1,n}=\frac{(\log n) ^{5/8}}{8n}.$$
 Applying Lemma \ref{ldm}, we have
\begin{eqnarray*}
\pp \left(f_n(x)>0 \quad \forall x \in (\tfrac{\log n}{6n},\tfrac{1}{\sqrt{n}})\right) = \pp \left(g_n(x)>0 \quad \forall x \in (\tfrac{\log n}{6n},\tfrac{1}{\sqrt{n}})\right) \geq \mu^{\frac{1}{\vartriangle_{1,n} \sqrt{n}}}.
\end{eqnarray*}
Therefore, 
\begin{eqnarray} \label{tr1}
\liminf \limits_{n\rightarrow \infty}\frac{1}{\sqrt{n}} \log \pp \left(f_n(x)>0 \quad \forall x \in (\tfrac{\log n}{6n},\tfrac{1}{\sqrt{n}})\right) \geq \liminf \limits_{n\rightarrow \infty} \frac{\log \mu}{n \vartriangle_{1,n}} =0.
\end{eqnarray}
Using \eqref{ggp} for $x \in (n^{-1/2},n^{-1/6})$, we get 
\begin{eqnarray*}
2 \vartriangle_{2,n}^2 \E(g_n'(x)^2) \leq  \E(g_n(x)^2),
 \end{eqnarray*}
 with 
 $$\vartriangle_{2,n}=\frac{1}{3n^{11/16}}.$$
 Applying Lemma \ref{ldm}, we have
\begin{eqnarray*}
\pp \left(f_n(x)>0 \quad \forall x \in (n^{-1/2},n^{-1/6})\right) = \pp \left(g_n(x)>0 \quad \forall x \in (n^{-1/2},n^{-1/6}) \right) \geq \mu^{\frac{1}{n^{1/6}\vartriangle_{2,n} }}.
\end{eqnarray*}
Therefore, 
\begin{eqnarray} \label{tr2}
\liminf \limits_{n\rightarrow \infty}\frac{1}{\sqrt{n}} \log \pp \left(f_n(x)>0 \quad \forall x \in (n^{-1/2},n^{-1/6}) \right) \geq \liminf \limits_{n\rightarrow \infty} \frac{\log \mu}{n^{2/3} \vartriangle_{2,n}} =0.
\end{eqnarray}
Using \eqref{tr1}, \eqref{tr2} and Slepian's lemma, we get \eqref{lp1b}.
\end{proof}
 \subsection{The main interval}\label{sec3}
 We make a transformation 
 $$x=\tan^2\left(\frac{t}{2\sqrt{n}} \right).$$
 Then $x\in(n^{-1/6},n^{1/6})$ is equivalent to $t\in (\alpha_n, \pi\sqrt{n}-\alpha_n)$, with  
 $$\alpha_n=2 \sqrt{n} \tan^{-1}(n^{-1/12})=(2+o(1))n^{5/12}.$$
  Let us define for $t \in (\alpha_n, \pi\sqrt{n}-\alpha_n)$,
  $$h_n(t)=f_n(\tan^2(t/2\sqrt{n})).$$
 Then 
 \begin{eqnarray}
 \pp \left( f_n(x) >0 \quad \forall \, x \in (n^{-1/6},n^{1/6}) \right) =  \pp \left( h_n(t) >0 \quad \forall \, t \in (\alpha_n, \pi\sqrt{n}-\alpha_n) \right).
 \end{eqnarray}
 Moreover, the autocorrelation function of $h_n(t)$ is 
\begin{eqnarray}
B_n(t,s)= A_n \left( \tan^2(\tfrac{t}{2 \sqrt{n}}), \tan^2( \tfrac{s}{2 \sqrt{n}} )\right) = \frac{M_n \left(\tan(\tfrac{t}{2 \sqrt{n}}) \tan( \tfrac{s}{2 \sqrt{n}} ) \right)}{\sqrt{M_n(\tan^2(\tfrac{t}{2 \sqrt{n}})}) \sqrt{M_n(\tan^2(\tfrac{s}{2 \sqrt{n}})} )}.
\end{eqnarray} 
We recall the approximation on $M_n(u)$. For $u\in(n^{-1/6},1)$, 
\begin{equation} \label{mnu}
M_n(u)=(1+O(n^{-1/24}))\frac{(u+1)^{2n+1}}{\sqrt{\pi n u}}.
\end{equation} 
  For $u \in (1, \infty)$, we remark that 
  $$M_n(u) = u^{2n} M_n(\tfrac{1}{u}).$$
  Therefore, the estimate  \eqref{mnu} holds for all $u\in (n^{-1/6},n^{1/6})$. Hence,
\begin{eqnarray*}
B_n(t,s) &=&  \frac{M_n \left(\tan(\tfrac{t}{2 \sqrt{n}}) \tan( \tfrac{s}{2 \sqrt{n}} ) \right)}{\sqrt{M_n(\tan^2(\tfrac{t}{2 \sqrt{n}})}) \sqrt{M_n(\tan^2(\tfrac{s}{2 \sqrt{n}})} )} \\
&=& (1+O(n^{-1/24}))\left( \frac{(1+ \tan(\tfrac{t}{2 \sqrt{n}}) \tan( \tfrac{s}{2 \sqrt{n}} ))^2}{ \left(1+\tan^2(\tfrac{t}{2 \sqrt{n}}) \right)\left(1+\tan^2(\tfrac{s}{2 \sqrt{n}}) \right)}\right)^{\tfrac{2n+1}{2}}\\
&=&(1+O(n^{-1/24})) \left[ \cos \left(\tfrac{t-s}{2\sqrt{n}} \right)\right]^{2n+1}.
\end{eqnarray*}   
We shift the interval $(\alpha_n, \pi \sqrt{n} -\alpha_n)$ to the interval $(0,\pi \sqrt{n}-2 \alpha_n)$ by changing variable
$$u=t-\alpha_n,$$
and define 
$$\bar{h}_n(u)=h_n(u+\alpha_n).$$
Then the autocorrelation of $\bar{h}_n(u)$ is 
\begin{eqnarray} \label{bbuv}
\bar{B}_n(u,v) = B_n(u+ \alpha_n, v+\alpha_n) = (1+O(n^{-1/24})) \left[ \cos \left(\tfrac{u-v}{2\sqrt{n}} \right)\right]^{2n+1}.
\end{eqnarray}
Observe that for fixed $u,v$, 
\begin{eqnarray*}
\bar{B}_n(u,v) \rightarrow e^{-(u-v)^2/4}. 
\end{eqnarray*} 
This fact suggests us to verify Conditions (b1)-(b4) of Lemma \ref{lem1} for the sequence of Gaussian processes $\{Z_t^{(n)}\}$ with autocorrelation functions $\bar{B}_n(u,v)$ and the limit stationary Gaussian process $Z_t^{(\infty)}$ with autocorrelation function $e^{-(u-v)^2/4}$.

\noindent {\it Verification of the condition (b1).} It is easily deduced from (\ref{bbuv}).

\noindent {\it Verification of the condition (b2).}  Here we consider two cases. In the first case where $\tau/\sqrt{n} \rightarrow 0$, as $\tau, n \rightarrow \infty$,
\begin{eqnarray*}
\tau^{\alpha} \bar{B}_n(u,u+\tau)  \leq (const)  \tau^{\alpha} \left[ \cos \left(\tfrac{\tau}{2\sqrt{n}} \right)\right]^{2n+1} \leq (const) \tau^{\alpha} e^{-\tau^2/4} \rightarrow 0.
\end{eqnarray*}

In the second case, assume that there exists a positive constant $c_0$ such that $\tau/\sqrt{n} \geq c_0$, then as $\tau, n \rightarrow \infty$,
\begin{eqnarray*}
\tau^{\alpha} \bar{B}_n(u,u+\tau)  \leq (const)  n^{\alpha/2} \left[ \cos \left(c_0/2 \right)\right]^{2n+1} \rightarrow 0.
\end{eqnarray*}

Thus Condition (b2) is verified.

\noindent {\it Verification of Condition (b3)}.  Using \eqref{bbuv}, we have for $n$ large enough
\begin{eqnarray*}
\bar{B}_n(t,t+\tau) \geq (1-n^{-1/30}) \left[\cos \left( \tfrac{\tau}{2 \sqrt{n}}\right)\right] ^{2n+1} \geq (1-n^{-/30})(1-\tau^2).
\end{eqnarray*}
Therefore, 
\begin{eqnarray*}
\bar{p}^2_n(w) = 2 -2 \inf_{\substack{0\leq \tau\leq w \\ 0\leq t, t + \tau   \leq \pi \sqrt{n} -2 \alpha_n}} \bar{B}_n(t, t+ \tau) \leq 2 w^2 + 2 n^{-1/30}.
\end{eqnarray*}
Hence, for any $\delta >0$, as $w\rightarrow 0$
\begin{eqnarray*}
|\log w|^2\sup \limits_{n \geq w^{-\delta}} \bar{p}_n^2(w) \rightarrow 0.
\end{eqnarray*}
Thus, to verify (b3), it suffices to show that for some $\delta >0$
\begin{eqnarray}  \label{nbe}
\lim \limits_{w \rightarrow 0^+}|\log w|^2\sup \limits_{n \leq w^{-\delta}} \bar{p}_n^2(w) < \infty.
\end{eqnarray}
To show \eqref{nbe} holds, it is sufficient to prove that 
\begin{eqnarray}  \label{nbeu}
\lim \limits_{u \rightarrow 0^+}|\log u|^2\sup \limits_{n \leq u^{-\delta}} p_n^2(u) < \infty,
\end{eqnarray}
where 
\begin{eqnarray*}
p^2_n(u) = 2 -2 \inf_{{\substack{0\leq y-x \leq u \\n^{-1/6} \leq x \leq y \leq n^{1/6}  }}} A_n(x, y).
\end{eqnarray*}
Recall that 
\begin{eqnarray*}
A_n(x,y)= \frac{M_n(\sqrt{xy})}{\sqrt{M_n(x)}\sqrt{M_n(y)}},
\end{eqnarray*}
where 
$$M_n(x)=\sum \limits_{i=0}^n \binom{n}{i}^2x^{2i}.$$
We have 
\begin{eqnarray*}
\varepsilon_n(x,y):= M_n(\sqrt{xy}) - M_n(x)= \sum \limits_{i=0}^n \binom{n}{i}^2x^{i}(y^i-x^i) \geq 0,
\end{eqnarray*}
since $y\geq x$, and 
\begin{eqnarray*}
\tilde{\varepsilon}_n(x,y):= M_n(y) - M_n(x)- 2 \varepsilon_n(x,y) =\sum \limits_{i=0}^n \binom{n}{i}^2(y^i-x^i)^2 \geq 0.
\end{eqnarray*}
Let $a >0 $ and $b,c \geq 0$ be  real numbers satisfying $ac-b^2 \geq 0$. Then
\begin{eqnarray*}
1-\frac{a+b}{\sqrt{a(a+2b+c)}} = \frac{ac-b^2}{\left(a+b+\sqrt{a(a+2b+c)}\right)\sqrt{a(a+2b+c)}}\leq \frac{ac-b^2}{a^2}.
\end{eqnarray*}
By the Cauchy-Schwarz inequality, $M_n(x)\tilde{\varepsilon}_n(x,y)-\varepsilon^2_n(x,y)\geq 0$. Hence, using the above inequality for $a=M_n(x)$,  $b=\varepsilon_n(x,y)$ and $c=\tilde{\varepsilon}_n(x,y)$, we get
\begin{eqnarray} \label{rv4}
0 \leq 1- A_n(x,y) &=& 1- \frac{M_n(x)+\varepsilon_n(x,y)}{\sqrt{M_n(x)\left( M_n(x)+ 2 \varepsilon_n(x,y) +   \tilde{\varepsilon}_n(x,y)\right)}} \notag\\
&\leq& \frac{M_n(x) \tilde{\varepsilon}_n(x,y) -\varepsilon_n^2(x,y)}{M_n^2(x)} \notag \\
&=& (y-x)^2 \frac{M_n(x) \tilde{\varepsilon}_{1,n}(x,y) -\varepsilon_{1,n}^2(x,y)}{M_n(x)},  \label{epp}
\end{eqnarray}
with 
\begin{eqnarray*}
\varepsilon_{1,n}(x,y) =\sum \limits_{i=0}^n \binom{n}{i}^2x^{i} \left(\frac{y^i-x^i}{y-x}\right),
\end{eqnarray*}
and
\begin{eqnarray*}
\tilde{\varepsilon}_{1,n}(x,y) =\sum \limits_{i=0}^n \binom{n}{i}^2 \left(\frac{y^i-x^i}{y-x}\right)^2.
\end{eqnarray*}
Using the fact  that 
\begin{eqnarray*}
\sum \limits_{i=0}^n a_i^2 \sum \limits_{i=0}^n b_i^2 - \left(\sum \limits_{i=0}^n a_i b_i \right)^2= \sum \limits_{i<j}^n (a_ib_j-a_jb_i)^2,
\end{eqnarray*}
we get  
\begin{eqnarray} \label{cs}
M_n(x) \tilde{\varepsilon}_{1,n}(x,y) -\varepsilon_{1,n}^2(x,y) = \sum_{i<j} \binom{n}{i}^2 \binom{n}{j}^2x^{2i}y^{2i}\left(\frac{y^{j-i}-x^{j-i}}{y-x}\right)^2.
\end{eqnarray}
We notice  that 
\begin{eqnarray} \label{rv5}
0\leq y-x \leq \tau \leq u, \hspace{1 cm} n\leq u^{-\delta}, \hspace{1cm} x \geq n^{-1/6} \geq u^{\delta/6}.
\end{eqnarray}
Hence,
\begin{eqnarray} \label{rv1}
y^{2i}\leq(x+u)^{2i}=x^{2i} \left(1+\frac{u}{x}\right)^{2i} \leq x^{2i}\left(1+n^{\tfrac{1}{6} -\tfrac{1}{\delta}}\right)^{2n} \leq 2 x^{2i},
\end{eqnarray}
for all $\delta \leq 1/2$ and $n$ large enough. Moreover, since $y-x \leq u \leq x$,
\begin{eqnarray} \label{rv2}
\frac{y^{j-i}-x^{j-i}}{y-x}=\sum_{k=0}^{j-i-1}(y-x)^kx^{j-i-k} \leq (j-i)x^{j-i-1} \leq nx^{j-i-1}.
\end{eqnarray}
Combining \eqref{cs}, \eqref{rv1} and \eqref{rv2} yields that
\begin{eqnarray} \label{rv3}
M_n(x) \tilde{\varepsilon}_{1,n}(x,y) -\varepsilon_{1,n}^2(x,y) &\leq& \frac{2n^2}{x^2}\sum_{i<j} \binom{n}{i}^2 \binom{n}{j}^2x^{2i+2j} \notag\\
&\leq& \frac{n^2}{x^2}M_n^2(x).
\end{eqnarray}
It follows from \eqref{rv4}, \eqref{rv5} and \eqref{rv3} that
\begin{eqnarray*}
0\leq 1-A_n(x,y) &\leq& \frac{u^2n^2}{x^2} \leq u^{2-7\delta/3} \leq u^{5/6}, 
\end{eqnarray*}
for $\delta \leq 1/2$. As consequence, \eqref{nbeu} holds. 

\noindent {\it Verification of the condition (b4).} It is easy to check (or see Remark 3.1 in \cite{LS}).

By the validity of Conditions (b1)-(b4), we deduce  from Lemma \ref{lem1} the following proposition. 
\begin{prop} \label{pma} We have 
$$ \lim \limits_{n\rightarrow \infty} \frac{1}{\pi \sqrt{n} }\log \pp \left(f_n(x)>0 \quad \forall x \in ( n^{-1/6},n^{1/6})\right) = -b, $$
with $b$ is as in the statement of Theorem \ref{mth}.
\end{prop}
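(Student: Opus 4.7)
The plan is to convert the event $\{f_n(x)>0\text{ for all }x\in(n^{-1/6},n^{1/6})\}$ into a persistence event for a Gaussian process on a long interval and then invoke Lemma \ref{lem1}. First I would substitute $x=\tan^2(t/(2\sqrt{n}))$. This maps $(n^{-1/6},n^{1/6})$ onto an interval of the form $(\alpha_n,\pi\sqrt{n}-\alpha_n)$, where $\alpha_n=2\sqrt{n}\tan^{-1}(n^{-1/12})\sim 2n^{5/12}$, so the interval has length $\pi\sqrt{n}(1+o(1))$. Defining $h_n(t)=f_n(\tan^2(t/(2\sqrt{n})))$ gives a centered Gaussian process whose persistence on this interval is exactly what we want to estimate. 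Shifting by $\alpha_n$ yields $\bar h_n(u):=h_n(u+\alpha_n)$ on $[0,\pi\sqrt{n}-2\alpha_n]$, and the goal is to show that $\bar h_n$ is close (in the sense of Lemma \ref{lem1}) to the centered stationary Gaussian process $Z^{(\infty)}$ with covariance $e^{-(u-v)^2/4}$.

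The core computation is the asymptotic form of the autocorrelation. Using Lemma \ref{lom}(ii), for $u\in(n^{-1/6},1)$ we have $M_n(u)=(1+O(n^{-1/24}))\frac{(u+1)^{2n+1}}{2\sqrt{\pi n u}}$, and the symmetry $M_n(u)=u^{2n}M_n(1/u)$ extends this to $u\in(n^{-1/6},n^{1/6})$. Plugging into $A_n(x,y)=M_n(\sqrt{xy})/\sqrt{M_n(x)M_n(y)}$ and using the trigonometric identity $(1+\tan a\tan b)^2/((1+\tan^2 a)(1+\tan^2 b))=\cos^2(a-b)$, one obtains
\begin{equation*}
\bar B_n(u,v)=(1+O(n^{-1/24}))\bigl[\cos\bigl(\tfrac{u-v}{2\sqrt{n}}\bigr)\bigr]^{2n+1},
\end{equation*}
which, for fixed $u,v$, tends to $e^{-(u-v)^2/4}$ as $n\to\infty$.

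Next I would verify conditions (b1)--(b4) of Lemma \ref{lem1} for the family $\{Z^{(n)}\}$ with autocorrelations $\bar B_n$ and the limit process $Z^{(\infty)}$. Condition (b1) is immediate from the uniform display above. For (b2) one splits into $\tau/\sqrt{n}\to 0$ (where $\cos^{2n+1}\approx e^{-\tau^2/4}$ dominates the polynomial $\tau^\alpha$) and $\tau/\sqrt{n}\gtrsim c_0$ (where exponential decay in $n$ kills the polynomial). Condition (b4) is a standard check for the Gaussian kernel $e^{-t^2/4}$, cf. \cite{LS}. I expect the main obstacle to be condition (b3), since it requires a \emph{quantitative} modulus of continuity for $A_n$ uniformly in $n$. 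The natural idea is to prove a bound $1-A_n(x,y)\leq C(y-x)^2/x^2$ directly from the definition of $M_n$; writing $\varepsilon_n(x,y)=M_n(\sqrt{xy})-M_n(x)$ and $\tilde\varepsilon_n(x,y)=M_n(y)-M_n(x)-2\varepsilon_n(x,y)=\sum_i\binom{n}{i}^2(y^i-x^i)^2$, one has the elementary inequality $1-\tfrac{a+b}{\sqrt{a(a+2b+c)}}\leq (ac-b^2)/a^2$, which together with the Cauchy--Schwarz identity $M_n(x)\tilde\varepsilon_{1,n}-\varepsilon_{1,n}^2=\sum_{i<j}\binom{n}{i}^2\binom{n}{j}^2 x^{2i}y^{2i}\bigl(\tfrac{y^{j-i}-x^{j-i}}{y-x}\bigr)^2$ (with $\varepsilon_{1,n},\tilde\varepsilon_{1,n}$ the divided differences) reduces everything to bounding $\tfrac{y^{j-i}-x^{j-i}}{y-x}\leq nx^{j-i-1}$ when $y-x\leq x$. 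This yields $1-A_n(x,y)\leq C n^2(y-x)^2/x^2$; combined with $x\geq n^{-1/6}$ and $n\leq u^{-\delta}$ for small enough $\delta$, this gives $p_n^2(u)\leq u^{5/6}$ uniformly, verifying (b3).

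With all four conditions in place, Lemma \ref{lem1}(b) yields
\begin{equation*}
\lim_{n\to\infty}\frac{1}{\pi\sqrt{n}-2\alpha_n}\log\mathbb{P}\bigl(\bar h_n(u)>0,\ \forall u\in[0,\pi\sqrt{n}-2\alpha_n]\bigr)=-b(A_\infty)=-b,
\end{equation*}
and since $(\pi\sqrt{n}-2\alpha_n)/(\pi\sqrt{n})\to 1$, the normalising constants can be replaced by $\pi\sqrt{n}$, giving Proposition \ref{pma}.
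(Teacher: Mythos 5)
Your proposal follows the paper's own argument essentially step by step: the same substitution $x=\tan^2(t/(2\sqrt{n}))$, the same use of Lemma \ref{lom}(ii) and the reflection symmetry of $M_n$ to obtain $\bar B_n(u,v)=(1+O(n^{-1/24}))[\cos((u-v)/(2\sqrt{n}))]^{2n+1}$, and the same verification of conditions (b1)--(b4) of Lemma \ref{lem1}, including the identical divided-difference/Cauchy--Schwarz bound for (b3). It is correct and not a different route.
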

  \subsection{Conclusion}
\label{sec: conc}  
   Thanks to Slepian's inequality, using  Propositions \ref{pne} and \ref{pma}  we get
$$ \liminf \limits_{n\rightarrow \infty} \frac{1}{ \pi \sqrt{n} }\log \pp \left(f_n(x)>0 \quad \forall x \in (-\infty, \infty)\right) \geq -b. $$  
 On the other hand, it follows directly from Proposition \ref{pma} that 
  $$ \limsup \limits_{n\rightarrow \infty} \frac{1}{ \pi \sqrt{n} }\log \pp \left(f_n(x)>0 \quad \forall x \in (-\infty, \infty)\right) \leq -b. $$  
  Combining these two inequalities we get Theorem \ref{mth}.
 \section{Summary and future work }
 In this paper, we have obtained an asymptotic formula for the persistence probability of the random polynomial $f_n$ in \eqref{eq: fn} that arises from evolutionary game theory. The persistence probability corresponds to the probability that a symmetric $n$-player two-strategy random game has no internal equilibria. We note that $f_n$ forms a different class of random polynomials that have been studied extensively in the literature particularly in random polynomial theory, see  \cite{EK95} and a recent paper \cite{lubinsky2018} and references therein for information. There are several open problems that are of interest for both evolutionary game theory and random polynomial theory that we do not address in this paper  such as proving a central limit theorem and a large deviation principle for the empirical measures of the real zeros of $f_n$ as well as studying universality phenomena for this class of random polynomials. We leave these problems for future research.
 \label{sec: final}
 \section{Appendix: derivation of $f_n$ from evolutionary game theory}
\label{sec: replicator}
In this appendix, we review the derivation of the random polynomial $f_n$ in \eqref{eq: fn} from the replicator dynamics for multi-player two-strategy games in evolutionary game theory. The replicator equation for multi-player two-strategy games has already been derived in previous works~ \cite{hofbauer:1998mm,sigmund:2010bo,gokhale:2010pn}. For the sake of completeness,  we rederive it here. 

We consider an infinitely large population consists of individuals using two strategies, $A$ and $B$. Let $y$, $0 \leq y\leq 1$, be the frequency of strategy $A$ in the population. The frequency of strategy $B$ is thus $(1-y)$. The interaction of the individuals in the population is in randomly selected groups of $n$ participants, that is, they interact and obtain their fitness from  $n$-player games. In this paper, we consider symmetric games where the payoffs do not depend on the ordering of the players. Let $a_k$ (respectively, $b_k$) be the payoff of that an $A$-strategist (respectively, $B$) achieves when interacting with a group containing  $k$ $A$ strategists (and $n-k$ $B$ strategists).  In symmetric games, the probability that an $A$ strategist interacts with $k$ other $A$ strategists in a group of size $n$ is 
\begin{equation*}
\begin{pmatrix}
n-1\\
k
\end{pmatrix}y^k (1-y)^{n-1-k}.
\end{equation*}
We note that this probability depends only on the number $k$ of $A$ strategist but not on the particular order of the group. The average payoffs of $A$ and $B$  are, respectively 
\begin{equation*}
\pi_A= \sum\limits_{k=0}^{n-1}a_k\begin{pmatrix}
n-1\\
k
\end{pmatrix}y^k (1-y)^{n-1-k},	\quad
\pi_B = \sum\limits_{k=0}^{n-1}b_k\begin{pmatrix}
n-1\\
k
\end{pmatrix}y^k (1-y)^{n-1-k}.
\end{equation*}
The replicator equation of a $d$-player two-strategy game is given by  \cite{hofbauer:1998mm,sigmund:2010bo,gokhale:2010pn}
\begin{equation*}
\dot{y}=y(\pi_A-\overline{\pi})=y(1-y)\big(\pi_A-\pi_B\big),
\end{equation*}
where $\overline{\pi}:=y\pi_A+(1-y)\pi_B$ is the average payoff of the population. The replicator equation reflects the natural selection. In fact, if $\pi_A\geq \overline{\pi}$ then $y$ increases, that is $A$ spreads in the population; vice versa, if $\pi_A<\overline{\pi}$ then $y$ decreases and $A$ declines.
Equilibrium points of the dynamics satisfy that $y(1-y)(\pi_A-\pi_B)=0$. Since $y=0$ and $y=1$ are two trivial equilibrium points, we focus only on internal ones,  i.e. $0 < y < 1$. They satisfy the condition that the  fitnesses of both strategies are the same $\pi_A=\pi_B$, which gives rise to
\begin{equation*}
\sum\limits_{k=0}^{n-1}\beta_k \begin{pmatrix}
d-1\\
k
\end{pmatrix}y^k (1-y)^{n-1-k} = 0,
\end{equation*}
where $\beta_k = a_k - b_k$. Using the transformation $x= \frac{y}{1-y}$, with $0< x < +\infty$, dividing the left hand side of the above equation by $(1-y)^{n-1}$ we obtain the following polynomial equation for $x$
\begin{equation}
\label{eq: eqn for y}
\sum\limits_{k=0}^{n-1}\beta_k\begin{pmatrix}
n-1\\
k
\end{pmatrix}x^k=0.
\end{equation}
Note that this equation can also be derived from the definition of an evolutionary stable strategy, see e.g., \cite{broom:1997aa}. In complex large systems, information about the interaction between participants is rarely available at the level of detail sufficient for the exact computation of the payoff matrix; therefore, it is necessary to suppose that the payoff matrix entries $a_k$ and $b_k$ (thus $\beta_k$) for $0\leq k\leq n-1 $, are random variables. We then obtain random games and the expression on the right-hand side of \eqref{eq: eqn for y} becomes a random polynomial. It is exactly the random polynomial $f_{n-1}$ in \eqref{eq: fn} that we start with.

We note that in this paper we need to make an assumption that the payoff differences $a_k-b_k$ are independent
standard normal random variables. This choice could be interpreted as modeling noise added to payoffs of a game where both strategies are neutral, i.e., have always identical payoffs. Although this assumption is rather restricted, it provides an exact match between random polynomial theory and random polynomial theory which opens up a new avenue for future research, for instance, among other things, to relax the identical and independent assumption.
 
 \begin{ack} \emph{The first author is supported by the fellowship of the Japan Society for the Promotion of Science and the Grant-in-Aid for JSPS fellows Number 	17F17319. We would like to thank anonymous referees for their useful suggestions.}
 \end{ack}
 \bibliographystyle{alpha}
 \bibliography{GTbib}
\end{document}